% SIAM Article Template
\documentclass[onefignum,onetabnum]{siamart220329}

% Information that is shared between the article and the supplement
% (title and author information, macros, packages, etc.) goes into
% ex_shared.tex. If there is no supplement, this file can be included
% directly.

% SIAM Shared Information Template
% This is information that is shared between the main document and any
% supplement. If no supplement is required, then this information can
% be included directly in the main document.

% Sets running headers as well as PDF title and authors
\headers{Gaussian RBF collocation for fractional PDEs}{Xiaochuan Tian, Yixuan Wu, Yanzhi Zhang}

\title{Gaussian Radial Basis Functions Collocation for Fractional PDEs: Methodology and Error Analysis\thanks{Submitted to the editors DATE.
\funding{This work was partially supported by 
 NSF grant DMS-2111608, DMS-2240180, DMS–1913293 and DMS–1953177. }}}

% Authors: full names plus addresses.
\author{Xiaochuan Tian\thanks{Department of Mathematics, University of California, San Diego, CA 92093, United States
(\email{xctian@ucsd.edu}).}
\and Yixuan Wu\thanks{Department of Pharmacology, University of California Davis, Davis, CA 95616,   United States (\email{ywxwu@ucdavis.edu}) } 
\and Yanzhi Zhang\thanks{Department of Mathematics and Statistics, Missouri University of Science and Technology, Rolla, MO 65409,  United States (\email{zhangyanz@mst.edu)}}}

\usepackage{multirow}
\usepackage{amsmath}
\usepackage{amssymb}
\usepackage{bm}

\usepackage{xcolor}
\usepackage{indentfirst}
\usepackage{graphicx}

\usepackage{mathrsfs,esint}
\usepackage{comment}
\usepackage{enumerate}
\usepackage{hyperref}

\usepackage{float}

\usepackage{subcaption}
\usepackage{pgfplots}
\pgfplotsset{compat=newest, compat/show suggested version=false}

\usepackage{tikz}
\usetikzlibrary{decorations.pathreplacing, decorations.pathmorphing, decorations.shapes}
\usetikzlibrary{arrows,calc}
\usetikzlibrary{intersections}
\usetikzlibrary{shapes.geometric}
\usepackage{etoolbox} 
\usepackage{listofitems}

\newcommand{\XT}[1]{{\color{orange} {\bf Xiaochuan:} [#1] }}
\newcommand{\YZ}[1]{{\color{purple} {\bf Yanzhi:} [#1] }}

%%% Local Variables: 
%%% mode:latex
%%% TeX-master: "main"
%%% End: 

\newcommand{\beq}{\begin{equation}}
\newcommand{\eeq}{\end{equation}}

\newcommand{\p}{\partial}
\newcommand{\Og}{\Omega}
\newcommand{\fl}[2]{\frac{#1}{#2}}
\newcommand{\nn}{\nonumber}
\newcommand{\Dt}{\Delta}
\newcommand{\bx}{{\bf x}}

%%%%%%%%%% MATHCAL %%%%%%%%%%

\def\cE{\mathcal{E}}

\def\cI{\mathcal{I}}

%%%%%%%%%% MATHBB %%%%%%%%%%

\def\C{\mathbb{C}}

\def\R{\mathbb{R}}

\def\Z{\mathbb{Z}}

%%%%%%%%%% MATHSCR %%%%%%%%%%

%%%%%%%%%% MATHFRAK %%%%%%%%%%

%%%%%%%%%%%%% BM %%%%%%%%%%%%%%%%

\def\eb{\bm e}

\def\jb{\bm j}
\def\kb{\bm k}

\def\mb{\bm m}
\def\nb{\bm n}

\def\xb{\bm x}
\def\yb{\bm y}
\def\zb{\bm z}

%%%%%%%%%% Greek letters %%%%%%%%%

\newcommand{\al}{\alpha}

\newcommand{\ga}{\gamma}
\newcommand{\del}{\delta}

\newcommand{\vep}{\varepsilon}

\newcommand{\la}{\lambda}
\newcommand{\vphi}{\varphi}
\newcommand{\Om}{\Omega}
\newcommand{\ap}{\alpha}
\newcommand{\xib}{\bm\xi}
\newcommand{\mub}{\bm\mu}

% Optional PDF information
\ifpdf
\hypersetup{
  pdftitle={Gaussian Radial Basis Functions Collocation for Fractional PDEs: Methodology and Error Analysis},
  pdfauthor={Xiaochuan Tian, Yixuan Wu, Yanzhi Zhang}
}
\fi

% The next statement enables references to information in the
% supplement. See the xr-hyperref package for details.

% \externaldocument[][nocite]{ex_supplement}

% FundRef data to be entered by SIAM
%<funding-group specific-use="FundRef">
%<award-group>
%<funding-source>
%<named-content content-type="funder-name"> 
%</named-content> 
%<named-content content-type="funder-identifier"> 
%</named-content>
%</funding-source>
%<award-id> </award-id>
%</award-group>
%</funding-group>

\begin{document}

\maketitle

% REQUIRED
\begin{abstract}
The paper introduces a new meshfree pseudospectral method based on Gaussian radial basis functions (RBFs) collocation to solve fractional Poisson equations. Hypergeometric functions are used to represent the fractional Laplacian of Gaussian RBFs, enabling an efficient computation of stiffness matrix entries. 
Unlike existing RBF-based methods, our approach ensures a Toeplitz structure in the stiffness matrix with equally spaced RBF centers, enabling efficient matrix-vector multiplications using fast Fourier transforms. 
We conduct a comprehensive study on the shape parameter selection, addressing challenges related to ill-conditioning and numerical stability. 
The main contribution of our work includes rigorous stability analysis and error estimates of the Gaussian RBF collocation method, representing a first attempt at the rigorous analysis of RBF-based methods for fractional PDEs to the best of our knowledge. 
We conduct numerical experiments to validate our analysis and provide practical insights for implementation.
\end{abstract}

% REQUIRED
\begin{keywords}
Radial basis functions, fractional Laplacian, pseudospectral method, numerical stability, error estimates, Toeplitz structure, saturation error, shape parameter selection
\end{keywords}

% REQUIRED
\begin{MSCcodes}
45K05, 47G30, 65N12, 65N15, 65N35
\end{MSCcodes}

%\XT{Some other titles to consider:\\
%1. Gaussian Radial Basis Functions Collocation for Fractional PDEs: Methodology and Error Analysis \\
%2. Convergence analysis for a Gaussian Radial Basis Functions Collocation method for fractional PDEs.\\
%I guess the third paragraph in the introduction serves as the main highlights/contributions. Do we need to present itemized highlights?}

\section{Introduction}
\label{section1} 

%%%% %%%% %%%% %%%%
Let $\Om \subset {\mathbb R}^d$ be a $d$-dimensional open and bounded domain. 
We consider the following fractional Poisson equation with extended Dirichlet boundary conditions:
\beq\label{Poisson}
\begin{aligned}
(-\Delta)^{\frac{\al}{2}} u({\bf x}) = f({\bf x}), 
\qquad & \text{on} \  \Om \\
u(\bx) = g(\bx),\qquad & \text{on} \ \Og^c,
\end{aligned}
\eeq
where $\Om^c = {\mathbb R}^d\backslash\Om$ denotes the complement of $\Og$.  
The fractional Laplacian $(-\Dt)^\fl{\ap}{2}$ is defined by a hypersingular integral (also known as the integral fractional Laplacian) \cite{Landkof,Samko}: 
\beq\label{integralFL}
(-\Delta)^{\fl{\alpha}{2}}u(\bx) = C_{d,\alpha}\,{\rm P. V.}\int_{{\mathbb R}^d}\fl{u(\bx) - u({\bf y})}{|\bx -{\bf y}|^{d+\ap}}d{\bf y}, \quad \ \ \mbox{for} \ \  \ap \in (0, 2),
\eeq 
where ${\rm P. V.}$ stands for the principal value integral, and 
$$C_{d,\alpha}=\frac{2^{\ap-1} \ap\,\Gamma\big(\fl{\ap+d}{2}\big)}{\sqrt{\pi^{d}}\,\Gamma\big(1 -\fl{\ap}{2}\big)}$$
with $\Gamma(\cdot)$ being the Gamma function. 
The fractional Laplacian represents the infinitesimal generator of a symmetric $\ap$-stable L\'evy process. 
It can be also defined via a pseudo-differential operator with symbol $|\xib|^\ap$  \cite{Landkof,Samko}: 
\beq
\label{pseudo}
(-\Delta)^{\fl{\alpha}{2}}u({\bx}) = \mathcal{F}^{-1}\big[|\xib|^\alpha \mathcal{F}[u]\big], \qquad \mbox{for} \ \  \ap > 0, 
\eeq
where $\mathcal{F}$ is denotes the Fourier transform, and $\mathcal{F}^{-1}$ is its associated inverse transform. 
In a special case of $\ap = 2$, the operator in \eqref{pseudo} reduces to the spectral representation of the classical negative Laplacian $-\Dt$. 
For $\ap \in (0, 2)$, the two definitions of the fractional Laplacian in (\ref{pseudo}) and (\ref{integralFL}) are equivalent for functions in the Schwartz space ${\mathcal S}({\R}^d)$ \cite{Kwasnicki2017,Landkof,Samko}. 

%%%% %%%% P3: 
So far, various numerical methods have been developed for the fractional Laplacian $(-\Dt)^\fl{\ap}{2}$, including finite element methods \cite{Delia2013, Tian2016, Acosta2017, Ainsworth2017, Bonito2019},  finite difference methods \cite{Huang2014, Duo2018, Duo2019-FDM, Minden2020, Hao2021, Wu2022}, and spectral methods  \cite{Kirkpatrick2016, Duo2016, Xu0018, Hao2021sharp}. 
Finite element and finite difference methods are typically formulated using the integral definition given in \cref{integralFL}.  
Consequently, a primary challenge lies in the accurate approximation of the hypersingular integrals. 
Furthermore, the entries of the stiffness matrix are usually determined by $d$- or $2d$-dimensional integrals, which makes the assembly of the stiffness matrix expensive, particularly when the dimension is large. 
%But the matrix could be symmetric Toeplitz matrix, and thus reduces the storage cost and computational cost in matrix-vector multiplications. 
On the other hand, spectral methods have gained increasing attention in solving nonlocal and fractional PDEs. 
They can achieve high accuracy with less number of points and thus effectively address the challenges caused by the nonlocality. 
Various spectral methods have been developed; see \cite{Duo2016, Hao2021sharp, Kirkpatrick2016, Xu0018, Sheng2020} and references therein. 
However, they are usually limited by simple geometry and boundary conditions. 
Recently, a new class of meshfree pseudospectral methods has been proposed based on Radial Basis Functions (RBFs) \cite{Burkardt2021, Wu0021, Wu2022, Zhuang0022}. 
These methods utilize the hypergeometric functions to represent the Laplacian of the RBFs and thus avoid numerical evaluation of hypersingular integrals associated with the fractional Laplacian.
Numerical experiments in these studies show high accuracy of the methods with a relatively small number of degree of freedom.
Despite their advantages, these methods face challenges such as ill-conditioned systems and unknown numerical stability. Furthermore, even when equal-spaced RBF center and test points are used, the stiffness matrices lose their Toeplitz structure.
The selection of shape parameters remains an open question, often addressed through ad-hoc choices in practical applications.

%%%% %%%% %%%% %%%%
In this paper, we propose a new meshfree pseudospectral method based on the Gaussian RBFs. 
Similar to the previously developed RBF-based pseudospectral methods, we utilize hypergeometric functions to express the fractional Laplacian of Gaussian RBFs, thus avoiding the need for approximating the hypersingular integrals and maintaining a dimension-independent cost in computing stiffness matrix entries.  
Differing from the methods presented in \cite{Burkardt2021, Wu0021, Wu2022, Zhuang0022}, our approach yields a stiffness matrix with a Toeplitz structure when equally spaced RBF centers are employed. This enables the development of fast algorithms, such as those based on fast Fourier transform (FFT), for efficient matrix-vector multiplications.
Furthermore, we conduct a comprehensive study of the selection of the shape parameter, aiming to address the challenges related to ill-conditioning and numerical stability. 
Numerical studies confirm the effectiveness of our approach in mitigating the ill-conditioning issues associated with RBF-based methods.
Another significant contribution of our work is the rigorous error estimates for the proposed method. To the best of our knowledge, our work provides the first set of analytical results for RBF-based methods applied to solving fractional PDEs.
Our method involves coupling the shape parameter and the spatial discretization parameter.
Under appropriate coupling of the two parameters, we show that the consistency error of the numerical scheme comprises a part that converges to zero at a rate depending on the smoothness of the functions (in particular, the rate is faster than any polynomial rate for $C^\infty$ functions). Additionally,  there is a relatively small yet non-convergent part known as the saturation error. 
This aligns with the notion of ``approximate approximation'' as discussed in \cite{mazya2007approximate}. In addition, achieving stability for collocation methods for nonlocal equations poses a nontrivial challenge. 
Our analytical approach for numerical stability is inspired by \cite{LTTF21,LTTF20}, where the collocation scheme is compared with the Galerkin scheme to establish stability.
To leverage the stability of the Gaussian RBF-based Galerkin scheme, we establish a new fractional Poincar\'e type inequality for globally supported (Gaussian mixture) functions.  
Our study not only fills the fundamental gap of lack of analytical results for RBF-based methods in the literature but also provides a practice guide in the selection of the shape parameters. 

%%%% %%%% %%%% %%%%
The paper is organized as follows. 
In \Cref{section-method}, we propose a new meshfree collocation method using Gaussian RBFs. 
We discuss the approximation of fractional Poisson equation with homogeneous Dirichlet boundary conditions. 
Non-homogeneous boundary value problems are converted into homogeneous ones through the introduction of an auxiliary function. 
\Cref{section-analysis} provides the truncation error and stability analyses for the proposed numerical method. 
In \Cref{section-experiments}, we test the performance of our method in solving fractional Poisson equations with homogeneous Dirichlet boundary conditions. \Cref{section-nonhomogeneous} is dedicated to the non-homogeneous boundary value problems, where we discuss the numerical practice of the approximation of the auxiliary function and present convergence studies of numerical solutions.
Conclusion and further discussions are presented in \Cref{section-conclusion}.

% ===============================================================
% Section: 
% ===============================================================
\section{Gaussian RBF collocation method}
\label{section-method}

In this section, we introduce a new meshfree collocation method based on Gaussian RBFs. 
We first note that \cref{Poisson} can be converted to a problem with homogeneous Dirichlet boundary conditions by subtracting the solution to the original equation from an auxiliary function that coincides with the boundary condition. More precisely, let $w : \R^d \to \R$ be a function of sufficient regularity such that $w|_{\Om^c} = g$.  
By letting $v = u -w$,  \cref{Poisson} is then equivalent to
\begin{equation}\label{Poisson-v}
\left\{ 
\begin{aligned}
(-\Delta)^{\frac{\al}{2}} v(\xb) = f(\xb) - (-\Delta)^{\frac{\al}{2}}w(\xb), \qquad   &\text{for} \ \ \xb \in \Om, \\
v(\xb) = 0, \qquad  &\text{for} \ \ \xb \in \Om^c .
\end{aligned}
\right.
\end{equation}
Therefore, we focus on the Poisson equation (\ref{Poisson}) with homogeneous Dirichlet boundary conditions (i.e., $g(\xb) \equiv 0$) in this section without loss of generality. 
Detailed discussions on the numerical approximation of the auxiliary function $w$ can be found in \Cref{section-nonhomogeneous}.

%%%% %%%%
A Gaussian RBF is defined as \begin{equation}
\label{Gaussian}
\varphi^\varepsilon({\xb}) = \exp(-\varepsilon^2|\xb|^2), \qquad\text{for} \ \ \xb \in {\mathbb R}^d,
\end{equation}
where $\varepsilon \in {\mathbb R}$ represents the shape parameter. For the discussion on appropriate choices of $\vep$, we refer the readers to later sections. 
Let $N > 0$ be an integer and  $\{\xb_k\}_{k=1}^N \in \Og$ be a set of RBF centers. 
We assume that solution $u$ to \cref{Poisson} for $g\equiv 0$ can be approximated by
\begin{equation}\label{ansatz}
u_h(\xb) = \sum_{k=1}^{N}\lambda_k\, \vphi^{\vep}(\xb-\xb_k) = \sum_{k=1}^{N} \lambda_k\,e^{-\varepsilon^2|\xb - \xb_k|^2}, \qquad \mbox{for} \ \ \xb \in \Og. 
\end{equation}
Our RBF collocation method for approximating \cref{Poisson} with $g\equiv0$ is formulated as
\beq
\label{discrete-Poisson}
(-\Delta)^{\frac{\al}{2}} u_h (\xb'_j) = f(\xb'_j), \quad j=1,2,\cdots, N,
\eeq
where $\{ \xb'_j\}_{j=1}^N\subset \Og$
is a set of test points that may or may not coincide with the set of centers $\{ \xb_k\}_{k=1}^N$. 
Note that for the homogeneous Dirichlet boundary value problem, the RBF centers and test points are taken only from the domain $\Og$, and the discrete problem \eqref{discrete-Poisson} is not accompanied by any boundary conditions, differing from the strategy in \cite{Burkardt2021} where RBF centers and test points are located on $\overline{\Og}$.
A basic rationale for our strategy is that when
the shape parameter $\varepsilon$ is large,  $u_h(\xb) \approx 0$ for $\xb \in \Og^c$, that is, $u_h$ approximately satisfies the homogeneous boundary conditions. 
Rigorous analysis for the convergence of the numerical solution $u_h$ to the exact solution $u$ for appropriately chosen $\vep$ is presented in \Cref{section-analysis}.

A significant benefit of Gaussian RBFs is that the action of fractional Laplacian on them are expressed analytically by hypergeometric functions (\cite{Burkardt2021, Pearson2009, Pearson2016}): %\XT{Can we cite a reference for the computation of hypergeometric functions? I think there are also questions of the computational cost for the hypergeometric functions if we want to claim that our method is superior in terms of matrix assembly.} \YZ{12/24: Two references have been added. I need to double check the computational cost in computing $_1F_1$ functions, but it might be difficult to directly compare with FDM/FEM. Their challenges are not only the $d-$dependent cost in evaluating integrals but also the treatment for singular integrals. The latter might be more severe.}\XT{Thanks. Maybe this is good enough for now, unless the reviewers ask questions.}
\begin{equation}\label{analy}
(-\Delta)^{\frac{\ap}{2}} e^{-\varepsilon^2|\xb|^2} =  \fl{2^\ap\Gamma\big(\fl{d+\ap}{2}\big)}{\Gamma\big(\fl{d}{2}\big)}|\varepsilon|^\ap\,_1F_1\Big(\fl{d+\ap}{2}; \, \fl{d}{2}; \, -\varepsilon^2|\xb|^2\Big), 
\end{equation}
for $\ap\geq 0$ and any dimension $d \ge 1$, where $_1F_1$ denotes the confluent hypergeometric function. 
Combining \eqref{ansatz} and \eqref{analy}, we immediately obtain
\begin{eqnarray}\label{FL-analytica}
(-\Delta)^\fl{\ap}{2}u_h(\xb) = \fl{2^\ap\Gamma\big(\fl{d+\ap}{2}\big)}{\Gamma\big(\fl{d}{2}\big)}|\varepsilon|^\ap\sum_{k = 1}^N \lambda_k \,_1F_1\Big(\fl{d+\ap}{2}; \, \fl{d}{2}; \, -\varepsilon^2|\xb-\xb_k|^2\Big),
\end{eqnarray}
for $\xb \in \Og$.
Denote ${\boldsymbol{\lambda}} = (\lambda_1, \lambda_2, \cdots, \lambda_N)^T$ and ${\bf f} = \big(f(\xb_1'), f(\xb_2'), \cdots, f(\xb_N')\big)^T$. 
The discrete problem (\ref{discrete-Poisson}) can be reformulated as a linear system $A \boldsymbol{\lambda} = {\bf f}$ with the entries of matrix $A=(a_{jk})_{j,k=1}^N$ given by
$$a_{jk} = \fl{2^\ap\Gamma\big(\fl{d+\ap}{2}\big)}{\Gamma\big(\fl{d}{2}\big)}|\varepsilon|^\ap\,_1F_1\Big(\fl{d+\ap}{2}; \, \fl{d}{2}; \, -\varepsilon^2|\xb_j'-\xb_k|^2\Big), \qquad \mbox{for} \ \ 1 \le j, k \le N.$$
Solving the linear system for ${\boldsymbol{\lambda}} = (\lambda_1, \lambda_2, \cdots, \lambda_N)^T$ and substituting them into (\ref{ansatz}), we can obtain the approximate solution to \cref{Poisson} with $g\equiv0$.

The above discussion shows that the dimension $d\geq1$ serves as a  parameter of the confluent hypergeometric function $_1F_1$. 
Hence, the computational cost in calculating entry $a_{kj}$ is independent of the dimension. 
This is one of the main advantages of our method compared to many other methods found in the literature, where the entries of the stiffness are determined by $d$- or $2d$-dimensional integrals \cite{Acosta2017, Ainsworth2018,DDGG20,  Duo2018, Hao2021, Huang2014, Minden2020}. 
Consequently, as the dimension $d$ increases, the computational cost associated with matrix assembly in these methods increase rapidly. 
Note that if test points $\xb'_j$ and center points $\xb_k$ are chosen from the same set of uniform grid points, the matrix $A$ is a (multi-level) symmetric Toeplitz matrix. In this case, fast algorithms for matrix-vector multiplication through FFT can be utilized to further reduce the computational cost. In this paper, we consider only the case where the test points coincide with the center points and are selected from uniform grid points.
This assumption not only facilitates fast algorithms but also allows for convergence analysis (detailed in \Cref{section-analysis}) of the RBF collocation method. Exploring more general cases remains a topic for future investigation.

%%%% %%%% 
Finally, we remark that our new method differs from the approach presented in \cite{Burkardt2021} in several key aspects. 
First, a crucial distinction lies in how boundary conditions are handled in the two approaches.
 In our method, homogeneous Dirichlet boundary conditions require no specific treatment, given that the solution ansatz \cref{ansatz} approximately satisfies the boundary conditions $g(\xb) = 0$ for $\xb\in\Og^c$ with appropriately large $\vep$.
 The treatment of non-homogeneous boundary conditions involves transforming them into homogeneous cases through the use of an auxiliary function.
 This particular boundary treatment allows for a Topelitz structure of the stiffness matrix when we use uniform grid points,  a crucial property that enables fast algorithms.  
Additionally, we employ a systematic approach to select the shape parameter $\vep$; further details will be provided later.
With appropriate coupling of the shape parameter $\vep$ with the mesh parameter $h$, it will be shown in \Cref{section-experiments} that the resulting linear systems exhibit significantly improved conditioning compared to those in \cite{Burkardt2021}.
The boundary treatment and the coupling of the shape parameter and the discretization parameter also allow for the numerical analysis of this approach, which will be detailed in \Cref{section-analysis}. To the best of our knowledge, this presents a first work for the analysis of RBF collocation method applied to fractional PDEs.

% ===============================================================
% Section: 
% ===============================================================
\section{Convergence analysis}
\label{section-analysis}
In this section, we discuss convergence analysis of the Gaussian RBF collocation method proposed in \Cref{section-method}.
Based on the discussion in \Cref{section-method}, we focus on the fractional Poisson equation with homogeneous Dirichlet boundary condition. The numerical construction of the auxiliary function $w$ for non-homogeneous problems will be addressed in subsequent sections. 
For a collection of collocation points $\{ \xb_i\}_{i=1}^N\subset \Om$, we define the associated finite dimensional space 
\beq
\label{eq:FiniteSoluSpace}
\begin{split}
V_h : = \left\{ v= \sum_{i=1}^N \lambda_i \vphi^\varepsilon(\xb-\xb_i): \la_i \in \R, i=1,2,\cdots,N.\right\} .  
\end{split}
\eeq 
We will analyze the convergence of the Gaussian RBF collocation method concerning the points $\{ \xb_i\}_{i=1}^N$ on a uniform grid, i.e., $\xb_i = \kb h$ for some $\kb\in\Z^d$ and grid size $h>0$. 
The numerical analysis for meshfree points in a more general context will be explored in future studies. Our approach hinges on the crucial idea of maintaining $\vep h$ as a constant, facilitating both the analysis of truncation error and stability. In the subsequent parts of this section, we conduct an examination of truncation error (in \Cref{subsection-truncation}) and stability (in \Cref{subsection-stability}) of the numerical scheme. Finally, we present the convergence result in \Cref{subsection-convergence}.
% ======================
\subsection{Truncation error analysis}
\label{subsection-truncation}
In this subsection, we perform truncation error analysis.
For any continuous function $u:\R^d\to \R$, we define $\cI_h u$ as the RBF interpolation of $u$ 
on a uniform lattice of grid size $h$, i.e., 
\begin{equation}
\label{eq:interpolation_def}
\cI_h u(\xb) = \sum_{\mb\in\Z^d} a_{\mb} \vphi^\vep(\xb - h\mb)
\end{equation}
such that \, $\cI_h u (h\mb) = u(h\mb)$ for all $\mb\in \Z^d$. 
An important insight from the existing literature highlights that such interpolating functions, while maintaining $\vep h$ as a constant, are accurate without being convergent in a rigorous sense. More specifically, the interpolation error consists of a part converging to zero as $h\to 0$, and a relatively small yet non-convergent part referred to as the {\it saturation error}. Such approximation procedures are termed ``approximate approximations'' in the work by Maz'ya and Schmidt \cite{mazya2007approximate}. 
Our goal in this subsection is to extend the error analysis in \cite{mazya2007approximate} for the approximation of function to study the truncation error $|(-\Delta)^{\frac{\al}{2}} u  - (-\Delta)^{\frac{\al}{2}} \cI_h u|$. In the end, it is shown in \Cref{thm:consistency} that the truncation error also consists of two parts -- a part converging to zero and another error part of saturation. 

We first recall some of the relevant results in \cite{mazya2007approximate}.
Throughout the rest of this paper, we denote
$$\gamma = (\vep h)^2,$$ 
and assume $\ga>0$ is a fixed constant. 
Following \cite[Chapter 7.3]{mazya2007approximate}, one can first rewrite $\cI_h u$ by Lagrangian functions, which are accurately approximated by 
\beq
\Psi_\gamma(\xb): = \frac{\gamma^d}{\pi^d} \prod_{j=1}^d \frac{\sin (\pi x_j)}{\sinh (\gamma x_j)}
\eeq
with $\xb = (x_1, x_2, \cdots, x_d)$. The Fourier transform of $\Psi_\gamma$ is given by
\beq
\label{eq:PsiTransform}
\widehat{\Psi}_\gamma(\xib) = \prod_{j=1}^d \frac{\sinh(\pi^2 \gamma^{-1})}{\cosh(\pi \gamma^{-1} \xi_j) + \cosh(\pi^2 \gamma^{-1}) }.
\eeq
Then instead of \cref{eq:interpolation_def}, we can represent $\cI_h u$ by (comment on error?)
\beq
\label{eq:interpolation}
\cI_h u (\xb) = \sum_{\mb\in \Z^d} u(h\mb) \Psi_\gamma(\frac{\xb}{h} - \mb).
\eeq

In the following, we adopt the standard multi-index notations for the presentation in this section. More precisely, a multi-index is a vector of non-negative integers, i.e., ${\bm\ap} = (\ap_1, \ap_2, \cdots, \ap_d)$ with $\ap_k \in {\mathbb N}^0$. Its length is defined as $|{\bm \al}| := \sum_{k=1}^d \al_k$, and we denote 
\beq{\bm\al}! = \prod_{k=1}^d \ap_k!, \qquad \xb^{\bm\ap} = \prod_{k=1}^d x_k^{\ap_k}, \qquad
 D^{\bm\ap}u(\xb) = \frac{\partial^{|{\bm \ap}|}}{\partial_{x_1}^{\ap_1} \partial_{x_2}^{\ap_2} \cdots  \partial_{x_d}^{\ap_d}}u(\xb).\nonumber
\eeq
Extending the result in \cite[Chapter 7.3]{mazya2007approximate}, we present a derivative estimate for function approximation by $\cI_h u$ defined in \cref{eq:interpolation}.
\begin{lemma}[approximation errors]
\label{lem:approx_err}
Suppose $M$ is a positive integer and
\begin{equation*}
|u|_M : = \int_{\R^d} |\widehat{u}(\xib)| |\xib|^M d\xib  < \infty. 
\end{equation*}
For a multi-index $\bm\beta$ with $0\leq |\bm\beta|\leq M$, we have the following estimate
\begin{equation*}
|D^{\bm\beta} u(\xb) - D^{\bm\beta} \cI_h u(\xb)| \leq c\,| u |_{M} h^{M-|\bm\beta|} + \sum_{|\bm\al|=0}^{M-1}\frac{|a^{(\bm\beta)}_{\bm\al}(\xb/h)|}{\bm\al !} |h|^{|\bm\al|-|\bm\beta|} |D^{\bm\al} u(\xb)|,  
\end{equation*}
where 
\begin{equation*}
a^{(\bm\beta)}_{\bm\al}(\xb) = D^{\bm\al}_{\xib} \big(\xib^{\bm\beta} - \theta_{\gamma}^{(\bm\beta)}(\xb, \xib)\big)|_{\xib = \bm{0},}
\end{equation*}
with
\begin{equation*}
\theta_{\gamma}^{(\bm\beta)}(\xb, \xib) = \sum_{\nb\in\Z^d} (\xib+ 2\pi\nb)^{\bm\beta} \widehat{\Psi}_\gamma(\xib + 2\pi\nb) e^{2\pi i \xb \cdot \nb}.
\end{equation*}
%with $
%\theta^{(\bm\beta)}_\gamma(\xb, \xib) = \sum_{\nb\in\Z^d} e^{2\pi i \xb \cdot \nb}  (\xib+ \nb)^{\bm\beta} \widehat{\Psi}_\gamma(\xib + \nb)
%$.
\end{lemma}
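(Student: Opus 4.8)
The plan is to derive the estimate from the Poisson-summation representation of the interpolation error, differentiated $\bm\beta$ times, following the template of \cite[Chapter 7.3]{mazya2007approximate} but tracking the extra derivative index. First I would write the interpolation operator on the Fourier side: starting from \cref{eq:interpolation} and using $\widehat{\Psi}_\gamma$ in \cref{eq:PsiTransform}, one expresses $\cI_h u$ through a periodization of $\widehat{u}$. Concretely, a Poisson-summation argument gives a pointwise formula
\begin{equation*}
\cI_h u(\xb) = \frac{1}{(2\pi)^d}\int_{\R^d} \widehat{u}(\xib)\,\sum_{\nb\in\Z^d} \widehat{\Psi}_\gamma\big(h\xib + 2\pi\nb\big)\, e^{i\xb\cdot\xib}\,e^{2\pi i (\xb/h)\cdot\nb}\,d\xib,
\end{equation*}
so that applying $D^{\bm\beta}$ brings down a factor $(i\xib)^{\bm\beta}$ (together with lower-order terms from differentiating the exponential $e^{2\pi i(\xb/h)\cdot\nb}$, which is exactly where the oscillatory coefficients $a^{(\bm\beta)}_{\bm\al}$ will originate). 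Comparing with $D^{\bm\beta}u(\xb) = (2\pi)^{-d}\int \widehat{u}(\xib)(i\xib)^{\bm\beta}e^{i\xb\cdot\xib}d\xib$ isolates the error as an integral whose kernel is $(i\xib)^{\bm\beta}$ minus the periodized-and-differentiated version of $\widehat{\Psi}_\gamma$.

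Next I would separate the $\nb=\bm0$ contribution from the $\nb\neq\bm0$ tail. The $\nb\neq\bm0$ terms are controlled by the rapid (super-polynomial in $\gamma^{-1}$) decay of $\widehat{\Psi}_\gamma$ away from the origin, producing the saturation-type structure but, for the \emph{convergent} piece, I would instead exploit the scaling in $h$: on these terms one trades powers of $|\xib|$ for powers of $h$, and after bounding by $\int|\widehat{u}(\xib)||\xib|^M d\xib = |u|_M$ one recovers the first term $c\,|u|_M h^{M-|\bm\beta|}$. For the $\nb=\bm0$ term I would Taylor-expand the smooth symbol $\xib^{\bm\beta}-\theta_\gamma^{(\bm\beta)}(\xb,\xib)$ in $\xib$ about $\xib=\bm0$ up to order $M-1$, with the Taylor coefficients being precisely $a^{(\bm\beta)}_{\bm\al}(\xb/h)/\bm\al!$ as defined in the statement; each monomial $(i\xib)^{\bm\al}$ in the expansion pairs against $\widehat{u}$ to return $D^{\bm\al}u(\xb)$, and the explicit factor $h^{|\bm\al|-|\bm\beta|}$ comes from the $h$-rescaling of the argument. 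The Taylor remainder of order $M$ is again absorbed into the $c\,|u|_M h^{M-|\bm\beta|}$ term.

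The main obstacle I expect is the bookkeeping of the two sources of error and making the split clean rather than the analysis of any single piece. Specifically, the coefficients $a^{(\bm\beta)}_{\bm\al}(\xb/h)$ depend on $\xb$ through the oscillatory factors $e^{2\pi i(\xb/h)\cdot\nb}$ carried by $\theta_\gamma^{(\bm\beta)}$, so one must verify they are bounded uniformly in $\xb$ (they are, by the absolute summability of $\widehat{\Psi}_\gamma(\cdot+2\pi\nb)$ and its derivatives) and confirm that differentiating the periodized sum term-by-term is justified — this requires the super-polynomial decay of $\widehat{\Psi}_\gamma$ to legitimize interchanging $D^{\bm\al}_{\xib}$, the sum over $\nb$, and the evaluation at $\xib=\bm0$. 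The remaining delicate point is ensuring the Taylor remainder estimate and the $\nb\neq\bm0$ tail estimate both collapse to the single clean bound $c\,|u|_M h^{M-|\bm\beta|}$ with a constant $c$ independent of $\xb$ and $h$ (depending only on $d$, $M$, $\bm\beta$, and the fixed $\gamma$); tracking the $\gamma$-dependence through $\widehat{\Psi}_\gamma$ while keeping $\gamma=(\vep h)^2$ fixed is what ties the estimate back to the "approximate approximation" philosophy, and I would handle it by absorbing all $\gamma$-dependent factors into $c$ since $\gamma$ is held constant throughout.
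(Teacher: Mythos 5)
Your overall skeleton does match the paper's proof: represent $D^{\bm\beta}\cI_h u$ on the Fourier side via Poisson summation, identify the error symbol $\xib^{\bm\beta}-\theta_\gamma^{(\bm\beta)}(\xb/h,\cdot)$ evaluated at $h\xib$, expand it about $\xib=\bm 0$ so that the coefficients $a^{(\bm\beta)}_{\bm\al}/\bm\al!$ pair with $\widehat u$ to give $D^{\bm\al}u(\xb)$, and push remainders into $c\,|u|_M h^{M-|\bm\beta|}$. But there is a genuine gap in the middle step, and it is not bookkeeping. The decomposition that makes the argument work is not your split of the kernel sum into $\nb=\bm 0$ versus $\nb\neq\bm 0$; it is a split of the \emph{frequency integral} in $\xib$ into the fundamental cell $h^{-1}Q$, $Q=(-\pi,\pi)^d$, and its complement. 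The reason is the limited domain of validity of the expansion: $\theta_\gamma^{(\bm\beta)}(\xb,\cdot)$ is meromorphic with poles at $2\pi\kb+\pi\eb+i\,2\gamma(\mb+\tfrac12\eb)$, so the power series of $\zb^{\bm\beta}-\theta_\gamma^{(\bm\beta)}(\xb,\zb)$ about $\zb=\bm 0$ converges only for $|z_i|\leq\sqrt{\pi^2+\gamma^2}-\delta$. In the error integral the symbol is evaluated at $h\xib$ with $\xib$ ranging over all of $\R^d$, so you may expand only where $h\xib\in Q$; on the rest of frequency space there is no ``Taylor remainder'' to absorb, because the series simply diverges there. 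The paper's term II handles $\xib\in\R^d\setminus h^{-1}Q$ by a separate argument: the piece carrying $(h\xib+2\pi\mub)^{\bm\beta}$ is bounded by the tail integral of $|\widehat u|\,|\xib|^{|\bm\beta|}$, which costs $\pi^{|\bm\beta|-M}h^{M-|\bm\beta|}|u|_M$, while the piece carrying $\theta_\gamma^{(\bm\beta)}$ requires the pointwise comparison $|\widehat{\Psi}_\gamma(h\xib+2\pi\nb)|\leq C|\widehat{\Psi}_\gamma(2\pi\nb)|$ for $\xib\in h^{-1}Q$ (proved via Lipschitz continuity and the uniform lower bound of $\widehat{\Psi}_\gamma$) together with the lattice summability of $\sum_{\nb}(1+|2\nb|^{|\bm\beta|})\widehat{\Psi}_\gamma(2\pi\nb)$. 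None of this machinery appears in your plan, and without it the central step fails.

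Relatedly, your $\nb$-split misattributes the sources of error and is inconsistent with the coefficients you must produce. The $\nb\neq\bm 0$ kernel terms evaluated at low frequencies $h\xib\in Q$ do not vanish as $h\to 0$: they are $O(e^{-c/\gamma})$ but not $o(1)$, and they feed (together with $1-\widehat{\Psi}_\gamma(h\xib)$) precisely into the saturation coefficients $a^{(\bm\beta)}_{\bm\al}$, which is why the statement's coefficients are defined through the \emph{full} periodized symbol $\theta_\gamma^{(\bm\beta)}$ — a fact you implicitly concede when you later expand that full symbol, abandoning the split. Conversely, the genuinely convergent $O(h^{M-|\bm\beta|})$ contribution comes from the high-frequency region $|\xib|\gtrsim\pi/h$, where one trades $1\leq(h|\xib|/\pi)^{M-|\bm\al|}$ against $|u|_M$; this trade is impossible on low frequencies. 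Finally, even within $h^{-1}Q$, truncating the series at order $M-1$ generates correction terms (replacing $\int_{h^{-1}Q}$ by $\int_{\R^d}$ to reconstruct $D^{\bm\al}u(\xb)$, and summing the series tail $|\bm\al|\geq M$), each of which needs its own $h^{M-|\bm\al|}$ tail bound. Your proposal would be repaired by inserting the low/high-frequency decomposition of the $\xib$-integral before the expansion step and supplying the two missing estimates for the high-frequency part.
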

\begin{proof}
For any multi-index $\bm\beta$, by inverse Fourier transform we have
\begin{equation*}
D^{\bm\beta} u(\xb) = \frac{1}{(2\pi)^d} D^{\bm\beta}\bigg(\int_{\R^d} \widehat{u}(\xib) e^{i \xb\cdot\xib} d\xib\bigg) = \frac{i^{|\bm\beta|}}{(2\pi)^d}  \int_{\R^d} \widehat{u}(\xib) \xib^{\bm\beta} e^{i \xb\cdot\xib} d\xib,
\end{equation*}
and
\begin{equation*}
\begin{split}
D^{\bm\beta} \cI_h u(\xb) &= \sum_{\mb\in\Z^d} u(h\mb) h^{-|\bm\beta|}  D^{\bm\beta}\Psi_\gamma \big(\frac{\xb}{h} - \mb\big) \\
&= \frac{1}{(2\pi)^d}  \sum_{\mb\in\Z^d} \left(\int_{\R^d} \widehat{u}(\xib) e^{i h\mb\cdot \xib} d\xib\right) h^{-|\bm\beta|} D^{\bm\beta}\Psi_\gamma\big(\frac{\xb}{h} - \mb\big) \\
&=\frac{1}{(2\pi)^d} \int_{\R^d} \widehat{u}(\xib) \bigg(\sum_{\mb\in\Z^d} h^{-|\bm\beta|} D^{\bm\beta}\Psi_\gamma\big(\frac{\xb}{h} - \mb\big)\,e^{ i h\mb\cdot \xib} \bigg) d\xib .
\end{split}
\end{equation*}
Recall the Poisson summation formula (see e.g., \cite{pinsky2008introduction}) 
\begin{equation*}
\sum_{\mb\in\Z^d} f(\mb) =\sum_{\nb\in\Z^d}  \widehat{f}(2\pi\nb).
\end{equation*}
We have
\begin{equation*}
D^{\bm\beta} \cI_h u(\xb) = \frac{i^{|\bm\beta|}}{(2\pi)^d}
 \int_{\R^d} \widehat{u}(\xib) e^{ i \xb\cdot \xib} \sum_{\nb\in\Z^d} h^{-|\bm\beta|} e^{ i 2\pi\frac{\xb}{h} \cdot \nb}   (h\xib + 2\pi\nb)^{\bm\beta} \widehat{\Psi}_\gamma(h\xib + 2\pi\nb).
\end{equation*}
By the definition of $\theta_{\gamma}^{(\bm\beta)}$, we then obtain
\begin{eqnarray}
%\begin{split}
&&D^{\bm\beta} u(\xb) - D^{\bm\beta} \cI_h u(\xb)  = \frac{i^{|\bm\beta|}}{(2\pi)^d}\int_{\R^d} \widehat{u}(\xib) h^{-|\bm\beta|} \left[ (h\xib)^{\bm\beta} -\theta_{\gamma}^{(\bm\beta)}\big(\frac{\xb}{h}, h\xib\big) \right] e^{i \xb\cdot\xib}\,d\xib\nonumber\\
&&=\scalebox{0.95}{$\frac{i^{|\bm\beta|}}{(2\pi)^d}\sum_{\mub\in\Z^d}\int_{h^{-1}Q}  \widehat{u}(\xib+ \frac{2\pi\mub}{h}) h^{-|\bm\beta|}\left[ (h\xib+ 2\pi\mub)^{\bm\beta} -\theta_{\gamma}^{(\bm\beta)}\big(\frac{\xb}{h}, h\xib+2\pi\mub\big) \right] e^{i \xb\cdot(\xib+\frac{2\pi\mub}{h})}   d\xib$} \nonumber \\
&& = \text{I} + \text{II}, \nonumber 
%\end{split}
\end{eqnarray}
where we denote $Q = (-\pi, \pi)^d$, and
\begin{equation*}
\begin{split}
&\text{I} = \frac{i^{|\bm\beta|}}{(2\pi)^d} \int_{h^{-1}Q}\widehat{u}(\xib) h^{-|\bm\beta|}  \Big[ (h\xib)^{\bm\beta} -\theta_{\gamma}^{(\bm\beta)}\big(\frac{\xb}{h}, h\xib\big) \Big] e^{ i \xb\cdot\xib}\,d\xib , \\
&\text{II} = \\
&
\scalebox{0.92}{$\frac{i^{|\bm\beta|}}{(2\pi)^d}  \sum_{\mub \in\Z^d\backslash\{\bm{0}\}}\int_{h^{-1}Q}\widehat{u}(\xib+ \frac{2\pi\mub}{h}) h^{-|\bm\beta|}\left[ (h\xib+ 2\pi\mub)^{\bm\beta} -\theta_{\gamma}^{(\bm\beta)}\big(\frac{\xb}{h}, h\xib+2\pi\mub\big) \right] e^{i \xb\cdot(\xib+\frac{2\pi\mub}{h})} \, d\xib$}. 
\end{split}
\end{equation*}

Now we estimate term I following the proof of \cite[Theorem 7.10]{mazya2007approximate}.
Notice that by \cref{eq:PsiTransform}, one can write 
\begin{equation*}
\theta_{\gamma}^{(\bm\beta)}(\xb, \xib) = \sum_{\nb\in\Z^d} \prod_{j=1}^d \frac{(\xi_j + 2\pi n_j)^{\beta_j}\sinh(\pi^2 \gamma^{-1}) e^{2\pi i x_j n_j}}{\cosh(\pi \gamma^{-1} (\xi_j+2\pi n_j)) + \cosh(\pi^2 \gamma^{-1}) }.
\end{equation*}
Therefore, for a fixed $\xb\in\R^d$, the function $\theta_{\gamma}^{(\bm\beta)}(\xb, \zb)$ is meromorphic in $\zb\in\C^d$ with simple poles at
\begin{equation*}
2\pi \kb + \pi \eb + i 2\ga (\mb + \frac{1}{2} \eb) \quad \kb, \mb \in \Z^d
\end{equation*}
where $\eb =(1,1,\cdots, 1)\in \R^d$.  
Noticing the definition of $a^{(\bm\beta)}_{\bm\al}(\xb)$, the series 
\beq
\label{eq:ThetaSeries}
\zb^{\bm\beta} - \theta_{\gamma}^{(\bm\beta)}(\xb, \zb) = \sum_{|\bm\al|=0}^\infty a^{(\bm\beta)}_{\bm\al}(\xb)\frac{\zb^{\bm\al}}{\bm\al !},
\eeq
is absolutely convergent for $\zb = (z_1,z_2,\cdots, z_d)\in \C^d$ with 
\begin{equation*}
|z_i|\leq \sqrt{\pi^2+\ga^2} -\delta
\end{equation*}
for any small fixed 
 constant $\delta>0$. 
Then
\begin{equation*}
\begin{split}
|\text{I}| &= \frac{1}{(2\pi)^d} \bigg| \sum_{|\bm\al|=0}^\infty \frac{a^{(\bm\beta)}_{\bm\al}(\frac{\xb}{h})}{\bm\al !} h^{|\bm\al|-|\bm\beta|}\int_{h^{-1}Q}\widehat{u}(\xib) \xib^{\bm\al} e^{i \xb\cdot\xib}  d\xib \bigg| \\
&= \bigg|\frac{1}{(2\pi)^d}\sum_{|\bm\al|=0}^{M-1} \frac{a^{(\bm\beta)}_{\bm\al}(\frac{\xb}{h})}{\bm\al !} h^{|\bm\al|-|\bm\beta|}\bigg(\int_{\R^d}\widehat{u}(\xib) \xib^{\bm\al} e^{i \xb\cdot\xib}  d\xib - \int_{\R^d\backslash h^{-1}Q}\widehat{u}(\xib) \xib^{\bm\al} e^{i \xb\cdot\xib}  d\xib\bigg)\bigg| \\
&\qquad \qquad +\bigg|\frac{1}{(2\pi)^d}\sum_{|\bm\al|=M}^{\infty} \frac{a^{(\bm\beta)}_{\bm\al}(\frac{\xb}{h})}{\bm\al !} h^{|\bm\al|-|\bm\beta|}\int_{h^{-1}Q}\widehat{u}(\xib) \xib^{\bm\al} e^{i \xb\cdot\xib}  d\xib \bigg| \\
& \leq \sum_{|\bm\al|=0}^{M-1}\frac{|a^{(\bm\beta)|}_{\bm\al}(\xb/h)|}{\bm\al !} |h|^{|\bm\al|-|\bm\beta|} |D^{\bm\al} u(\xb)| + c_1(M) h^{M-|\bm\beta|} | u|_{M}. 
\end{split}
\end{equation*}
where $c_1(M) = \max_{\xb} \sum_{|\bm\al|=0}^{\infty}\frac{|a^{(\bm\beta)|}_{\bm\al}(\xb/h)|}{\bm\al !} \pi^{|\bm{\al}|-M}$ is well-defined because of \cref{eq:ThetaSeries}.  
The last inequality in the above is true since for $|\bm\al|<M$,
\begin{equation*}
\left| \int_{\R^d\backslash h^{-1}Q}\widehat{u}(\xib) \xib^{\bm\al} e^{i \xb\cdot\xib}  d\xib \right| \leq  \pi^{|\bm\al|-M} h^{M-|\bm\al|} \int_{\R^d\backslash h^{-1}Q} |\widehat{u}(\xib)|  |\xib|^{M}  d\xib,
\end{equation*}
and for $|\bm\al|\geq M$,
\begin{equation*}
\left|\int_{h^{-1}Q}\widehat{u}(\xib) \xib^{\bm\al} e^{i \xb\cdot\xib}  d\xib\right| \leq \pi^{|\bm\al|-M} h^{M-|\bm\al|} \int_{h^{-1}Q} |\widehat{u}(\xib)|  |\xib|^{M}  d\xib
\end{equation*}
Now we estimate term II.  
Notice that $|{\bm\beta}| \le M$ and
\begin{equation*}
\begin{split}
&\bigg| \frac{i^{|\bm\beta|}}{(2\pi)^d}  \sum_{\mub \in\Z^d\backslash\{\bm{0}\}}\int_{h^{-1}Q}\widehat{u}(\xib+ \frac{2\pi\mub}{h}) h^{-|\bm\beta|}(h\xib+ 2\pi\mub)^{\bm\beta} e^{i \xb\cdot(\xib+\frac{2\pi\mub}{h})} d\xib  \bigg| \\
\leq& \,
\frac{1}{(2\pi)^{d}}\int_{\R^d\backslash h^{-1}Q}  |\widehat{u}(\xib)| |\xib|^{|\bm\beta|} d\xib \leq 2^{-d}\pi^{|\bm\beta|-M-d} h^{M-|\bm\beta|} |u|_{M}. 
\end{split}
\end{equation*}
Therefore, we only need to estimate the term that involves $\theta_{\gamma}^{(\bm\beta)}(\frac{\xb}{h}, h\xib+2\pi\mub)$ in II. Notice that 
\begin{equation*}
\begin{split}
\theta_{\gamma}^{(\bm\beta)}(\frac{\xb}{h}, h\xib+2\pi\mub) = &\sum_{\nb\in\Z^d}  \big(h\xib+2\pi(\mub+ \nb)\big)^{\bm\beta}\,\widehat{\Psi}_\gamma\big(h\xib +2\pi(\mub+ \nb)\big)\,e^{2\pi i \frac{\xb}{h} \cdot \nb}\\
=& \sum_{\nb\in\Z^d} \big(h\xib+ 2\pi\nb\big)^{\bm\beta}\,\widehat{\Psi}_\gamma\big(h\xib + 2\pi\nb\big)\,e^{2\pi i \frac{\xb}{h} \cdot (\nb-\mub)}.
\end{split}
\end{equation*}
We therefore have
\begin{equation*}
\begin{split}
& \bigg| \frac{i^{|\bm\beta|}}{(2\pi)^d}  \sum_{\mub \in\Z^d\backslash\{\bm{0}\}}\int_{h^{-1}Q}\widehat{u}(\xib+ \frac{2\pi\mub}{h}) h^{-|\bm\beta|}\,\theta_{\gamma}^{(\bm\beta)}\big(\frac{\xb}{h}, h\xib+2\pi\mub\big)\,e^{2\pi i \xb\cdot(\xib+\frac{2\pi\mub}{h})}   d\xib \bigg| \\
\leq & \  \fl{1}{(2\pi)^{d}}\sum_{\mub \in\Z^d\backslash\{\bm{0}\}}\int_{h^{-1}Q} \big|\widehat{u}(\xib+ \frac{2\pi\mub}{h})\big| h^{-|\bm\beta|} \sum_{\nb\in\Z^d}|h\xib+ 2\pi\nb|^{|\bm\beta|} \big|\widehat{\Psi}_\gamma(h\xib + 2\pi\nb)\big| d\xib . 
\end{split}
\end{equation*}
We claim that there exists a constant $C>0$ such that $|\widehat{\Psi}_\gamma(h\xib + 2\pi\nb)|\leq C |\widehat{\Psi}_\gamma(2\pi\nb)|$ for $\xib\in h^{-1}Q$ by the definition in \eqref{eq:PsiTransform}. Indeed, this is true since $\widehat{\Psi}_\gamma(\xib)$ is a Lipschitz function for $\xib\in\R^d$ with a Lipschitz constant $L>0$, and in addition $|\widehat{\Psi}_\gamma(\xib)|\geq c$ for some $c>0$ for all $\xib\in\R^d$. The claim is then implied by
\begin{equation*}
|\widehat{\Psi}_\gamma(h\xib + 2\pi\nb) - \widehat{\Psi}_\gamma(2\pi\nb)| \leq L |h\xib| \leq \pi L \leq \pi L c^{-1} |\widehat{\Psi}_\gamma(2\pi\nb)|. 
\end{equation*}
In addition, for $\xib \in h^{-1}Q$, we get $$|h\xib+ 2\pi\nb|^{|\bm\beta|} \leq 2^{|\bm\beta|-1} \pi^{|\bm\beta|}(1 + |2\nb|^{|\bm\beta|}) = (2\pi)^{|\bm\beta|} 2^{-1}(1+ |2\nb|^{|\bm\beta|}).$$  Therefore
\begin{equation*}
\begin{split}
& \fl{1}{(2\pi)^{d}} \sum_{\mub \in\Z^d\backslash\{\bm{0}\}}\int_{h^{-1}Q} \big|\widehat{u}(\xib+ \frac{2\pi\mub}{h})\big| h^{-|\bm\beta|} \sum_{\nb\in\Z^d}|h\xib+ 2\pi\nb|^{|\bm\beta|} \big|\widehat{\Psi}_\gamma(h\xib + 2\pi\nb)\big| d\xib \\
\leq & C(2\pi)^{|\bm\beta|-d}  h^{-|\bm\beta|}\bigg(\sum_{\nb\in\Z^d} \big(1 + |2\nb|^{|\bm\beta|}\big) \widehat{\Psi}_\gamma(2\pi\nb)\bigg) \int_{\R^d\backslash h^{-1}Q} |\widehat{u}(\xib)| d\xib \leq  C h^{M-|\bm\beta|} |u|_{M}.
\end{split}
\end{equation*}
%\YZ{In the last inequality, we assumed that the term in brackets is bounded by a constant?}
%\XT{[Yes. It is claimed in the book that $\sum_{\nb\in\Z^d}\widehat{\Psi}_\gamma(\nb) =1$. For the term $\sum_{\nb\in\Z^d}|\nb|^{|\bm\beta|}\widehat{\Psi}_\gamma(\nb)$, since there is an exponential term in the denominator of $\widehat{\Psi}_\gamma(\nb)$, I think the summation should also be bounded. However, I cannot think of a good way to estimate the bound.]}
\end{proof}

%%%% %%%% Lemma: [Consistency error]

We now present the truncation error analysis below. 

\begin{theorem}[consistency error]
\label{thm:consistency}
Suppose $M \in \Z^+$ and $|u|_M<\infty$.Then
\begin{equation*}
\begin{split}
\| (-\Delta)^{\frac{\al}{2}} u  - (-\Delta)^{\frac{\al}{2}} \cI_h u\|_{L^\infty(\R^d)}& \leq  
C h^{M-2} |u|_{M}  \\
&+ \sum_{|\bm\beta|\in\{ 0,2\}}\sup_{\xb\in\R^d}\sum_{|\bm\al|=0}^{M-1}\frac{|a^{(\bm\beta)}_{\bm\al}(\frac{\xb}{h})|}{\bm\al !} h^{|\bm\al|-|\bm\beta|} |D^{\bm\al} u(\xb)|.
\end{split}
\end{equation*}
    
\end{theorem}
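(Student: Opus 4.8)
The plan is to avoid working directly with the symbol $|\xib|^\al$ in Fourier space. Since $|\xib|^\al$ fails to be smooth at the origin, it does not admit the Taylor-type expansion in $\xib$ that powers \Cref{lem:approx_err}, so mimicking that proof verbatim is not an option. Instead I would exploit the pointwise second-difference representation of the fractional Laplacian. Writing $e = u - \cI_h u$, I start from
\begin{equation*}
(-\Delta)^{\frac{\al}{2}} e(\xb) = -\frac{C_{d,\al}}{2}\int_{\R^d} \frac{e(\xb+\zb) + e(\xb-\zb) - 2e(\xb)}{|\zb|^{d+\al}}\, d\zb,
\end{equation*}
which holds for functions of sufficient regularity and decay. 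The finiteness of $|u|_M$ with $M\ge 2$ guarantees that $u$ and $\cI_h u$ are twice continuously differentiable with bounded derivatives, so this representation and the manipulations below are justified.

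The key idea is to split the integral at the scale $|\zb| = 1$ and treat the two regimes with estimates of different order. For the near field $|\zb|\le 1$, a second-order Taylor expansion of $e$ makes the first-order terms in $\pm\zb$ cancel, leaving
\begin{equation*}
\big| e(\xb+\zb) + e(\xb-\zb) - 2e(\xb)\big| \le C\,|\zb|^2 \sum_{|\bm\beta|=2} \sup_{\yb\in\R^d} |D^{\bm\beta} e(\yb)|.
\end{equation*}
Because $\int_{|\zb|\le 1} |\zb|^{2-d-\al}\,d\zb < \infty$ precisely when $\al < 2$, this bounds the near-field contribution by a constant times $\sum_{|\bm\beta|=2}\sup_\yb |D^{\bm\beta} e(\yb)|$. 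For the far field $|\zb|> 1$, I would simply use $|e(\xb+\zb)+e(\xb-\zb)-2e(\xb)|\le 4\sup_\yb|e(\yb)|$ together with $\int_{|\zb|>1}|\zb|^{-d-\al}\,d\zb<\infty$, valid because $\al>0$; this bounds the far-field contribution by a constant times $\sup_\yb|e(\yb)|$. This is exactly the mechanism by which the orders $|\bm\beta|=0$ and $|\bm\beta|=2$ enter the statement.

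It then remains to feed in \Cref{lem:approx_err}. Applying that lemma with $|\bm\beta|=2$ to each $\sup_\yb |D^{\bm\beta}e(\yb)|$ and with $\bm\beta = \bm 0$ to $\sup_\yb|e(\yb)|$, and taking the supremum over $\yb$ of the pointwise bounds, produces exactly the two saturation sums indexed by $|\bm\beta|\in\{0,2\}$ in the statement. The leading convergent terms combine as $c\,|u|_M h^{M-2}$ (from $|\bm\beta|=2$) and $c\,|u|_M h^{M}$ (from $|\bm\beta|=0$); since $h^{M}\le h^{M-2}$ for $h\le 1$, these merge into the single term $C h^{M-2}|u|_M$, completing the estimate. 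The main obstacle I anticipate is not the structural decomposition but the bookkeeping needed to justify interchanging the singular $\zb$-integral with the supremum over $\yb$, and to confirm that the interpolant $\cI_h u$—a globally supported sum of Gaussians that need not decay—has the boundedness and regularity required for the second-difference representation to apply, with the tails controlled uniformly in $h$.
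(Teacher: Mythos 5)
Your proposal is correct and follows essentially the same route as the paper: the paper likewise applies the integral definition to $e = u - \cI_h u$, splits at radius $r=1$ into a near field bounded by $r^{2-\al}\sum_{|\bm\beta|=2}\|D^{\bm\beta}e\|_{L^\infty}$ and a far field bounded by $r^{-\al}\|e\|_{L^\infty}$, and then invokes \Cref{lem:approx_err} for $|\bm\beta|\in\{0,2\}$. Your explicit use of the symmetrized second-difference form merely makes rigorous the cancellation that the paper's P.V. formulation leaves implicit.
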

\begin{proof}
By definition \eqref{integralFL}, we obtain
\begin{equation*}
 \begin{split}
 &\big| (-\Delta)^{\frac{\al}{2}} u(\xb) -  (-\Delta)^{\frac{\al}{2}} \cI_h u(\xb) \big|  \\
 \leq &C_{d,\alpha}\left(  \int_{B_r(\xb)} +  \int_{\R^d\backslash B_r(\xb)} \right) \frac{(u-\cI_hu)(\xb) - (u-\cI_hu)(\yb) }{|\yb -\xb|^{d+\al}} d\yb  \\
 \leq &C \bigg( r^{2-\alpha} \sum_{|\bm\beta| =2}\|D^{\bm\beta} u - D^{\bm\beta}\cI_h u \|_{L^\infty(\R^d)} + r^{-\alpha} \| u - \cI_h u \|_{L^\infty(\R^d)} \bigg), 
 \end{split}
\end{equation*}
 where $r>0$. Since $r$ can be any positive number, we choose $r=1$ for simplicity.
 Then the consistency error is obtained by applying \Cref{lem:approx_err}.
\end{proof}

% ======================================
\subsection{Stability analysis}
\label{subsection-stability}
In this subsection, we show the numerical stability of the Gaussian RBF collocation method.
Achieving stability for collocation methods for nonlocal equations poses a nontrivial challenge, primarily owing to the absence of a discrete maximum principle. A useful perspective involves comparing collocation schemes with Galerkin schemes \cite{arnold1983asymptotic,arnold1984asymptotic,costabel1992error,LTTF21}.
Here, we adopt the approach outlined in \cite{LTTF21} by comparing the Fourier symbol of the RBF collocation scheme to that of the Galerkin scheme. Subsequently, we employ a fractional Poincar\'e inequality within the finite-dimensional solution spaces to establish stability. 
The main result of this subsection is presented in \Cref{thm:stability}.
 Our first fundamental result on which the numerical stability is based is the following fractional Poincar\'e  inequality on the finite-dimensional space $V_h$ defined in \cref{eq:FiniteSoluSpace}. 

\begin{proposition}
\label{prop:poincare}
Let $V_h$ be defined by \eqref{eq:FiniteSoluSpace}. 
There exists a constant $C>0$ independent of $h$ such that for any $v\in V_h$, 
\begin{equation}
\label{eq:poincare}
\| v\|_{L^2(\R^d)}^2 \leq C\left(  (-\Delta)^{\frac{\alpha}{2}}  v , v\right)_{L^2(\R^d)} = C | v|_{H^{\alpha/2}(\R^d)}^2. \end{equation}
\end{proposition}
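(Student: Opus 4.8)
The plan is to pass to the Fourier side, where the lattice structure of the centers converts \eqref{eq:poincare} into an elementary pointwise comparison of two explicit weights. By Plancherel's identity, both sides of \eqref{eq:poincare} are, up to the same multiplicative constant, integrals of $|\widehat v|^2$: the left-hand side against the weight $1$ and the right-hand side against $|\xib|^\al$. Since $\vphi^\vep$ is a Gaussian its transform is again Gaussian, and because the centers lie on the lattice $h\Z^d$ we may write, for $v=\sum_{\kb\in\cK}\la_{\kb}\vphi^\vep(\xb-h\kb)$ with $\cK=\{\kb\in\Z^d:h\kb\in\Om\}$,
\[
\widehat v(\xib)=\Big(\tfrac{\pi}{\vep^2}\Big)^{d/2}e^{-|\xib|^2/(4\vep^2)}\,P(h\xib),\qquad P(\bm\theta)=\sum_{\kb\in\cK}\la_{\kb}\,e^{-i\kb\cdot\bm\theta},
\]
where $P$ is a $2\pi$-periodic trigonometric polynomial. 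Substituting $\bm\eta=h\xib$ and using $\ga=(\vep h)^2$, the Gaussian factor becomes $e^{-|\bm\eta|^2/(2\ga)}$ and a factor $h^\al$ is produced on the right, so that $\|v\|_{L^2}^2/|v|_{H^{\al/2}}^2$ equals $h^\al$ times $\int_{\R^d}e^{-|\bm\eta|^2/(2\ga)}|P(\bm\eta)|^2\,d\bm\eta$ divided by $\int_{\R^d}|\bm\eta|^\al e^{-|\bm\eta|^2/(2\ga)}|P(\bm\eta)|^2\,d\bm\eta$.

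Next I would fold these two integrals onto a single period using $|P(\bm\eta+2\pi\nb)|=|P(\bm\eta)|$, which collapses the Gaussian weights into their periodizations
\[
W_0(\bm\eta)=\sum_{\nb\in\Z^d}e^{-|\bm\eta+2\pi\nb|^2/(2\ga)},\qquad \widetilde W_\al(\bm\eta)=\sum_{\nb\in\Z^d}|\bm\eta+2\pi\nb|^\al e^{-|\bm\eta+2\pi\nb|^2/(2\ga)},
\]
and rewrites the two integrals as $\int_Q W_0|P|^2$ and $\int_Q\widetilde W_\al|P|^2$ over the cell $Q=(-\pi,\pi)^d$. Since $|P|^2\ge0$, it then suffices to prove the \emph{pointwise} bound $W_0(\bm\eta)\le C_0\,\widetilde W_\al(\bm\eta)$ on $\overline Q$ for a constant $C_0$ depending only on $\ga,d,\al$; combined with the prefactor this gives $\|v\|_{L^2}^2\le h^\al C_0\,|v|_{H^{\al/2}}^2$, hence \eqref{eq:poincare} with $C=C_0\,h_0^\al$ for any grid size $h\le h_0$.

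The crux, and the step where the \emph{global support} of the Gaussian mixture is essential, is this pointwise comparison. Away from the origin there is nothing to prove since $|\bm\eta|^\al$ is bounded below; the only danger is $\bm\eta\to \bm 0$, where the $\nb=\bm 0$ term of $\widetilde W_\al$ degenerates. The key observation is that $\widetilde W_\al$ nevertheless stays strictly positive there, because the aliased copies $\nb\ne\bm 0$ supply $\widetilde W_\al(\bm 0)=\sum_{\nb\ne\bm 0}|2\pi\nb|^\al e^{-|2\pi\nb|^2/(2\ga)}>0$. This is precisely the mechanism that replaces the spectral gap one would obtain from a bounded support: the periodized weight never vanishes, so there is no uncontrolled low-frequency mass. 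Both $W_0$ and $\widetilde W_\al$ are continuous on the compact set $\overline Q$, the defining series converging uniformly by Gaussian decay, and $\widetilde W_\al>0$ throughout; hence $C_0:=\sup_{\bm\eta\in\overline Q}W_0(\bm\eta)/\widetilde W_\al(\bm\eta)$ is finite and independent of $h$, which completes the argument. I expect the verification of the uniform lower bound on $\widetilde W_\al$ near the origin to be the only delicate point, everything else being bookkeeping of Fourier transforms and periodization.
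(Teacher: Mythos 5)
Your proposal is correct, and it takes a genuinely different route from the paper. The paper splits $\|v\|_{L^2(\R^d)}^2$ at a fixed frequency cutoff $b$: the high-frequency part is trivially bounded by $b^{-\al}|v|_{H^{\al/2}(\R^d)}^2$, while the low-frequency part is shown in \Cref{lem:lowfrequency} to be at most $\frac{1}{2}\|v\|_{L^2(\R^d)}^2$ by a physical-space matrix argument: the Gram matrix of the Gaussians has minimal eigenvalue bounded below by Wendland's estimate (depending only on $d$ and $\ga$), and the low-frequency kernel matrix is made small enough for diagonal dominance by shrinking $b$; that step uses the boundedness of $\Om$ crucially, through the bound $Nh^d\leq c_3$ on the number of centers. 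You instead periodize the Fourier weights onto the cell $(-\pi,\pi)^d$ and prove the pointwise comparison $W_0\leq C_0\widetilde W_\al$ there; in the paper's notation this amounts to comparing the mass symbol $\cE_\vphi$ of \Cref{lem:equivalence_norms} directly with the Galerkin symbol $\cE_G$ of \cref{eq:EGalerkin}, a comparison the paper never makes (it compares $\cE_C$ with $\cE_G$ for stability, and bounds $\cE_\vphi$ between constants for the norm equivalence). Your key observation -- that the periodized weight $\widetilde W_\al$ stays strictly positive at the origin because the aliased lattice copies contribute -- is exactly the right mechanism, and your continuity/compactness argument on $\overline{Q}$ is sound since at most the single term $\nb=\bm 0$ can vanish there. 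What your route buys: a sharper estimate $\| v\|_{L^2(\R^d)}^2\leq C(d,\al,\ga)\,h^\al\,|v|_{H^{\al/2}(\R^d)}^2$ whose constant is independent of $\Om$, and indeed your argument never uses that the number of centers is finite, so it extends verbatim to arbitrary $l^2$ coefficients over the whole lattice, with no reliance on RBF native-space eigenvalue estimates. What the paper's route buys: Wendland's eigenvalue bound requires only a separation distance, so the Gram-matrix argument admits a natural extension to scattered (meshfree) centers, whereas your periodization is tied to the uniform grid. One small point of bookkeeping: your final constant is $C_0h_0^\al$ for $h\leq h_0$, so the statement as you prove it requires an upper bound on $h$; this is harmless and consistent with the paper's usage (cf.\ \Cref{them-conv}, which assumes $h\in(0,1)$).
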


We note that fractional Poincar\'e inequality has been established for functions with compact supports; see \cite{brasco2019note,covi2021unique}. Here, a major challenge in proving \cref{eq:poincare} arises due to the global supports of functions in $V_h$. To the best of our knowledge,  a Poincar\'e type inequality of this nature has not been established previously.
The main idea for proving \cref{eq:poincare} relies on decomposing the $L^2$ integration into high and low frequencies, utilizing the Plancherel theorem: 
\begin{equation*}
\| v\|_{L^2(\R^d)}^2 =\int_{|\xib| < b} |\widehat{v}(\xib)|^2 d\xib + \int_{|\xib| > b} |\widehat{v}(\xib)|^2 d\xib.
\end{equation*}
The high-frequency integration can be easily bounded by $| v|_{H^{\alpha/2}(\R^d)}^2$, while the following lemma establishes a bound for the low-frequency integration. 

 \begin{lemma}
 \label{lem:lowfrequency}
 Let $\gamma = (\varepsilon h)^2$  be fixed.
 There exists a constant $b>0$ depending only on $d$, $\gamma$ and $\Om$ such that
 \beq
 \label{eq:lowfrequency}
\int_{|\xib| < b} |\widehat{u}(\xib)|^2 d\xib \leq \frac{1}{2}\| u\|^2_{L^2(\R^d)}, \quad \ \forall  u \in V_h.
\eeq
\end{lemma}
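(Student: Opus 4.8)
The plan is to pass to the Fourier side and exploit the product structure of $\widehat u$. First I would record that the Fourier transform of a Gaussian is again Gaussian, so that for $u=\sum_i\lambda_i\vphi^\vep(\xb-\xb_i)\in V_h$ with centers $\xb_i=h\kb_i$, $\kb_i\in\Z^d$, one has $\widehat u(\xib)=C_\vep\,e^{-|\xib|^2/(4\vep^2)}P(\xib)$, where $C_\vep=(\sqrt\pi/\vep)^d$ and $P(\xib)=\sum_i\lambda_i e^{-i h\kb_i\cdot\xib}$ is a trigonometric polynomial that is $\tfrac{2\pi}{h}\Z^d$-periodic. Since $\gamma=(\vep h)^2$ is fixed, the substitution $\xib=\eta/h$ makes the two competing scales --- the Gaussian width $\sim\vep\sim 1/h$ and the period $2\pi/h$ --- collapse into an $h$-independent picture: $e^{-|\xib|^2/(4\vep^2)}=e^{-|\eta|^2/(4\gamma)}$ and $P(\eta/h)=\tilde P(\eta):=\sum_i\lambda_i e^{-i\kb_i\cdot\eta}$ is $2\pi$-periodic.

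Next I would compute both quantities in \cref{eq:lowfrequency} in these scaled variables. By Plancherel, $\|u\|_{L^2(\R^d)}^2=(2\pi)^{-d}\int_{\R^d}|\widehat u|^2$; after the substitution and folding the Gaussian into the periodized theta-type function $G(\eta):=\sum_{\nb\in\Z^d}e^{-|\eta+2\pi\nb|^2/(2\gamma)}$, the full integral becomes $C_\vep^2 h^{-d}\int_Q|\tilde P(\eta)|^2 G(\eta)\,d\eta$ with $Q=(-\pi,\pi)^d$. Since $G_{\min}\le G\le G_{\max}$ for positive constants depending only on $\gamma,d$, and $\int_Q|\tilde P|^2=(2\pi)^d\sum_i|\lambda_i|^2$ by orthogonality of the exponentials over $Q$, I get $\|u\|_{L^2(\R^d)}^2\ge C_\vep^2 h^{-d}G_{\min}\sum_i|\lambda_i|^2$ (all $(2\pi)^d$ factors cancel). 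Bounding the low-frequency part by $C_\vep^2 h^{-d}\int_{|\eta|<bh}|\tilde P|^2\,d\eta$ (discarding $e^{-|\eta|^2/(2\gamma)}\le 1$) and cancelling the common factor $C_\vep^2 h^{-d}$, the claim reduces to showing $\int_{|\eta|<bh}|\tilde P(\eta)|^2\,d\eta\le\tfrac12 G_{\min}\sum_i|\lambda_i|^2$.

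The hard part is this reverse-concentration estimate: I must show that a trigonometric polynomial whose frequencies $\kb_i$ lie in a ball of radius $R/h$ (with $R=\sup_{\xb\in\Om}|\xb|$, since $\xb_i\in\Om$) cannot place more than a fixed fraction of its period-cell $L^2$ mass inside the tiny ball $|\eta|<bh$. Because $\tilde P$ oscillates on scale $\sim h/R$, the ball of radius $bh$ is finer than the oscillation only when $b$ is small relative to $\mathrm{diam}(\Om)$; this is exactly where the dependence of $b$ on $\Om$ is forced, and a Dirichlet-kernel example shows it cannot be removed. To make it rigorous I would use near-constancy: for $|\eta|<bh$, $|\tilde P(\eta)-\tilde P(0)|\le|\eta|\sum_i|\lambda_i||\kb_i|\le bR\,\sqrt N\,(\sum_i|\lambda_i|^2)^{1/2}$, which together with $|\tilde P(0)|^2\le N\sum_i|\lambda_i|^2$ gives $|\tilde P(\eta)|^2\le 2N(1+b^2R^2)\sum_i|\lambda_i|^2$ on the ball. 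Integrating and using the uniform lattice-point count $N\le C_\Om h^{-d}$ (so $N(bh)^d\le C_\Om b^d$) yields $\int_{|\eta|<bh}|\tilde P|^2\le 2\omega_d C_\Om(1+b^2R^2)\,b^d\sum_i|\lambda_i|^2$, with constants depending only on $d$ and $\Om$.

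Finally I would choose $b$ small, depending only on $d$, $\gamma$ and $\Om$, so that $2\omega_d C_\Om(1+b^2R^2)\,b^d\le\tfrac12 G_{\min}$; since the left side tends to $0$ as $b\to0$, this is possible and closes the estimate. I expect the only routine technical points to be the absolute convergence of the Gaussian lattice sum defining $G$ (justifying the periodization and the two-sided bound $G_{\min}\le G\le G_{\max}$) and the uniform-in-$h$ bound $N\le C_\Om h^{-d}$; neither should present real difficulty, the genuine obstacle being the frequency-localization step above, which reflects the global-support nature of $V_h$ that standard compactly-supported fractional Poincar\'e inequalities cannot handle.
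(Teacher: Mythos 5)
Your proof is correct, but it takes a genuinely different route from the paper's. The paper reduces \cref{eq:lowfrequency} to the positive semi-definiteness of the matrix $\{\tfrac{1}{2}K_1(\xb_i,\xb_j)-K_2(\xb_i,\xb_j)\}_{i,j=1}^N$, where $K_1$ is the Gaussian Gram kernel and $K_2$ the low-frequency kernel; it then imports Wendland's lower bound $\la_{\min}\ge c_1 e^{-c_2/\gamma}\gamma^{-d/2}$ for the minimal eigenvalue of the Gaussian interpolation matrix and controls the $K_2$ part by strict diagonal dominance (using $Nh^d\le c_3$). You instead work entirely on the Fourier side: you factor $\widehat{u}$ into a Gaussian envelope times a $2\pi/h$-periodic trigonometric polynomial, rescale so the picture becomes $h$-independent (this is exactly where $\gamma=(\vep h)^2$ fixed enters), periodize the Gaussian into $G(\eta)$, and use exact orthogonality of the exponentials over $Q=(-\pi,\pi)^d$ to get the coercivity bound $\|u\|_{L^2}^2\ge C_\vep^2 h^{-d}G_{\min}\sum_i\lambda_i^2$ --- in effect a self-contained re-derivation, for lattice centers, of the eigenvalue bound the paper cites from Wendland. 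Your near-constancy estimate of $\tilde P$ on the ball $|\eta|<bh$, combined with the lattice count $Nh^d\le C_\Om$, then plays the role of the paper's diagonal-dominance step; both arguments rest on the same density assumption $Nh^d\lesssim 1$ and both yield a $b$ depending only on $d$, $\gamma$ and $\Om$. What each approach buys: yours is elementary and self-contained (no external RBF-theoretic result needed) and makes transparent why the uniform grid and the fixed ratio $\gamma$ matter, since everything collapses to an $h$-independent periodic problem; the paper's matrix-level argument is shorter given the cited result and, because Wendland's eigenvalue bound requires only a separation distance rather than a lattice, it is better positioned to extend to scattered centers, where your exact-orthogonality step would break down.
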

\begin{proof}
Let $u = \sum_{j=1}^N \lambda_j \vphi^\vep(\xb -\xb_j)$, then
\begin{equation*}
\|  u \|_{L^2(\R^d)}^2  =\sum_{i,j =1}^N \la_i \la_{j}  \int_{\R^d}  \vphi^\varepsilon (\xb -\xb_i )  \vphi^\varepsilon (\xb -\xb_j )  d\xb ,
\end{equation*}
On the other hand, notice that
\beq
\label{eq:FTGaussian}
\widehat{\vphi^\vep}(\xib) = \int_{\R^d} e^{-i \xb\cdot\xib} e^{-\vep^2 |\xb|^2} d\xb = \prod_{k=1}^d \int_{\R} e^{-i x_k\xi_k} e^{-\vep^2 x_k^2} dx_k =\frac{\pi^{d/2}}{\vep^d} e^{-\frac{|\xib|^2}{4\vep^2}}. \eeq
Therefore,
\begin{equation*}
\int_{|\xib| < b} |\widehat{u}(\xib)|^2 d\xib  =  \sum_{i,j =1}^N \la_i \la_{j} \frac{\pi^d}{\vep^{2d}} \int_{|\xib|<b} e^{i (\xb_j - \xb_i)\cdot \xib} e^{-\frac{|\xib|^2}{2\vep^2}} d\xib.
\end{equation*}
Denote 
\begin{align*}
K_1(\xb, \yb) = \int_{\R^d }  \vphi^\varepsilon (\zb -\xb )  \vphi^\varepsilon (\zb -\yb )  d\zb , \text{ and }K_2(\xb, \yb) = \frac{\pi^d}{\vep^{2d}} \int_{|\xib|<b} e^{i (\yb - \xb)\cdot \xib} e^{-\frac{|\xib|^2}{2\vep^2}} d\xib.
\end{align*}
To show \cref{eq:lowfrequency}, we only need to show that $\{  \frac{1}{2}K_1(\xb_i, \xb_j) - K_2(\xb_i, \xb_j) \}_{i,j=1}^N $ is positive semi-definite for a $b>0$ depending only on $d$, $\ga$ and $\Om$. Notice that
\begin{equation*}
\begin{split}
  &K_1(\xb,\yb)=  \int_{\R^d} \vphi^\varepsilon (\zb -\xb )  \vphi^\varepsilon (\zb -\yb )  d\zb = \int_{\R^d} e^{-\varepsilon^2 |\zb-\xb|^2}  e^{-\varepsilon^2 |\zb-\yb|^2}   d\zb\\
    =&\int_{\R^d} e^{-\varepsilon^2 \sum_{i=1}^d (2z_i^2 -2z_ix_i - 2z_iy_i + x_i^2 + y_i^2) } d\zb = \int_{\R^d}  e^{- \varepsilon^2 \sum_{i=1}^d \left[  2\left(  z_i - \frac{x_i+y_i}{2}\right)^2 +  \frac{(x_i-y_i)^2}{2}\right] }  d\zb \\
=&   e^{- \frac{1}{2} \varepsilon^2 |\xb-\yb|^2}   \int_{\R^d}  e^{- 2\varepsilon^2 \left|\zb - \frac{\xb+\yb}{2}\right|^2 } d\zb  = e^{- \frac{1}{2} \varepsilon^2 |\xb-\yb|^2} \int_{\R^d}  e^{- 2\varepsilon^2  \left|\zb  \right|^2 } d\zb \\
=& e^{- \frac{1}{2} \varepsilon^2 |\xb-\yb|^2}\frac{\pi^{d/2}}{(\sqrt{2}\vep)^d}.  
    \end{split}
\end{equation*}
Let $\la_{\min}$ be the minimum eigenvalue of the matrix $G=\{ (2^{-1}\pi)^{d/2} e^{-\frac{1}{2}\varepsilon^2|\xb_i -\xb_j|^2} \}_{i,j=1}^N$, then by \cite[Corollary 12.4]{wendland2004scattered},
\begin{equation*}
\la_{\min} \geq c_1 e^{-\frac{c_2}{\gamma}} \gamma^{-d/2}, 
\end{equation*}
for some $c_1>0$ and $c_2>0$ depending only on $d$. 
On the other hand,
\begin{equation*}
|K_2(\xb_i,\xb_j)| \leq \frac{\pi^d}{\vep^{2d}} \int_{|\xib|<b} e^{-\frac{|\xib|^2}{2\vep^2}} d\xib  = \frac{(b\pi)^d}{\vep^{2d}} \int_{|\xib|<1} e^{-\frac{b^2|\xib|^2}{2\vep^2}} d\xib \leq  \frac{(b\pi)^d h^d}{\vep^{d}\ga^{d/2}} |B_1|,
\end{equation*}
where $|B_1|$ denotes the volume of the unit ball in $\R^d$. Therefore,
\begin{equation*}
\bigg|\sum_{j=1}^N K_2(\xb_i,\xb_j)\bigg|\leq N \frac{(b\pi)^d h^d}{\vep^{d}\ga^{d/2}} |B_1| \leq c_3 \frac{(b\pi)^d}{\vep^{d}\ga^{d/2}} |B_1|,
\end{equation*}
for some $c_3$ depending only on $\Om$. Now choose $b>0$ such that 
\begin{equation*}
c_3 \frac{(b\pi)^d}{\ga^{d/2}} |B_1|  \leq \frac{\la_{\min}}{4} \quad \Longleftrightarrow  \quad b 
\leq \frac{\ga^{1/2}(\la_{\min})^{1/d}}{\pi (4 c_3)^{1/d}},
\end{equation*}
we then have 
\beq
\label{eq:Kij}
\bigg|\sum_{j=1}^N K_2(\xb_i,\xb_j)\bigg|\leq \frac{\la_{\min}}{4 \vep^d}.
\eeq
Notice that since the lower bound of $\lambda_{\min}$ depends only on $d$ and $\gamma$, $b$ only depends on $d$, $\ga$, and $\Om$. 
We claim that 
\begin{equation*}
\left\{ \frac{1}{2}K_1(\xb_i, \xb_j) - K_2(\xb_i,\xb_j)\right\}_{i,j=1}^N \succeq \frac{1}{2\varepsilon^d} \left(G - \la_{\min} I \right) \succeq 0. 
\end{equation*}
Indeed, the above is true since $ \frac{\la_{\min}}{2\varepsilon^d} I  - \{ K_2(\xb_i, \xb_j)\}_{i,j}^N  $ is a positive definite matrix. It is, in fact, a strictly diagonally dominant Hermitian matrix by \cref{eq:Kij}. Therefore we have the desired result.  
\end{proof}

We are now ready to show the proof of the fractional Poincar\'e inequality. 

\begin{proof}[Proof of \Cref{prop:poincare}]
By Plancherel theorem, we have
\begin{equation*}
\| v\|_{L^2(\R^d)}^2 =\int_{\R^d} |\widehat{v}(\xib)|^2 d\xib = \int_{|\xib| < b} |\widehat{v}(\xib)|^2 d\xib + \int_{|\xib| > b} |\widehat{v}(\xib)|^2 d\xib,
\end{equation*}
for $b>0$. Choose $b>0$ depending only on $d$, $\ga$ and $\Om$ such that \cref{eq:lowfrequency} is satisfied. Notice that 
\begin{equation*}
\int_{|\xib| > b} |\widehat{v}(\xib)|^2 d\xib \leq  b^{-\al} \int_{|\xib|>b} |\xib|^{\al} |\widehat{v}(\xib)|^2 d\xib  \leq   b^{-\al} \int_{\R^d} |\xib|^{\al} |\widehat{v}(\xib)|^2 d\xib = b^{-\al} |v|_{H^{\al/2}(\R^d)}^2.
\end{equation*}
Therefore,
\begin{equation*}
\| v\|_{L^2(\R^d)}^2 \leq \frac{1}{2} \| v\|_{L^2(\R^d)}^2 +  b^{-\al} |v|_{H^{\al/2}}^2,
\end{equation*}
which implies the Poincar\'e inequality. 
\end{proof}

In the next, we study the Fourier symbol of the RBF collocation scheme. 
We consider a uniform Cartesian grid on $\R^d$, i.e., $\yb_{\kb} = h\kb$, $\kb\in \Z^d$.
For any continuous function $v:\R^d\to\R$, we define two restriction operators
\begin{equation*}
r^h v := (v(\yb_{\kb}))_{\kb\in \Z^d}, \quad \text{and} \quad r^h_\Om v := (v(\yb_{\kb}))_{\kb\in \Z^d,\yb_{\kb}\in \Om}.  
\end{equation*}
Note that $r^h$ maps $v$ to an infinite sequence and $r^h_\Om$ maps $v$ to a finite sequence. 
The inverse Fourier transform gives us
\begin{equation*}
(-\Delta)^{\al/2} u (\yb_{\kb}) = \frac{1}{(2\pi)^d}\int_{\R^d} |\xib|^\al \widehat{u}(\xib) e^{i \yb_{\kb} \cdot \xib} d\xib.  
\end{equation*}
Therefore 
\begin{equation*}
\left[ (-\Delta)^{\al/2} \vphi^\vep(\cdot - \yb_{\kb'}) \right]\Big|_{\yb_{\kb}} = \frac{1}{(2\pi)^d}\int_{\R^d} |\xib|^\al \widehat{\vphi^\vep}(\xib) e^{i (\yb_{\kb}-\yb_{\kb'})\cdot\xib} d\xib, 
\end{equation*}
since the Fourier transform of $\vphi^\vep(\cdot - \yb_{\kb'})$ is $e^{-i\yb_{\kb'}\cdot\xib} \widehat{\vphi^\vep}(\xib)$.  
Now for any $(v_{\kb}) := (v_{\kb})_{\kb\in\Z} \in l^2(\Z^d)$, we define the associated Fourier series
\begin{equation*}
\widetilde{v} (\xib ) = \sum_{\kb\in \Z^d} e^{i\kb\cdot\xib } v_{\kb}, \quad \ \forall \xib \in  (-\pi ,\pi)^d. 
 \end{equation*}
 We equip $l^2(\Z^d)$ with a scaled $l^2$ inner product and the associated norm by
 \begin{equation*}
 ((u_{\kb}), (v_{\kb}))_{l^2} = h^d \sum_{\kb\in \Z^d} u_{\kb}v_{\kb}\quad \text{and}\quad \|(v_{\kb})\|_{l^2} = \sqrt{h^d \sum_{\kb\in \Z^d} (v_{\kb})^2}.
 \end{equation*}
 Therefore for any $v = \sum_{\kb\in\Z^d} \lambda_{\kb} \vphi^\vep(\cdot-\yb_{\kb}) $ and $(\la_{\kb}):=(\la_{\kb})_{\kb\in\Z^d}\in l^2(\Z^d)$,
\begin{equation*}
\begin{split}
&((\la_{\kb}),  r^h (-\Delta)^{\frac{\alpha}{2}} v ))_{l^2} = \frac{h^d}{(2\pi)^d}\sum_{\kb, \kb'\in \Z^d} \la_{\kb} \la_{\kb'} \int_{\R^d}  |\xib|^\al \widehat{\vphi^\vep}(\xib) e^{i (\yb_{\kb}-\yb_{\kb'})\cdot \xib} d\xib  \\
&= \frac{h^d}{(2\pi)^d}\sum_{\kb, \kb'\in \Z^d} \la_{\kb} \la_{\kb'}  \sum_{\jb\in\Z^d } \int_{\left(-\frac{\pi}{h},\frac{\pi}{h}\right)^d + \frac{2\pi}{h}\jb}  |\xib|^\al \widehat{\vphi^\vep}(\xib) e^{i h(\kb- \kb')\cdot \xib} d\xib  \\
&=  \frac{1}{(2\pi)^d}\sum_{\kb, \kb'\in \Z^d} \la_{\kb} \la_{\kb'}  \sum_{\jb\in\Z^d } \int_{(-\pi, \pi)^d}  \Big|\frac{\xib - 2\pi\jb}{h} \Big|^\al \widehat{\vphi^\vep}\Big(\frac{\xib - 2\pi\jb}{h}\Big) e^{i (\kb- \kb')\cdot\xib} d\xib  \\
&= \frac{1}{(2\pi)^d}\int_{(-\pi,\pi)^d} \big|\widetilde{\la}(\xib )\big|^2\cE_C(h, \xib) d\xib, 
\end{split} 
\end{equation*}
where 
\begin{equation}
\label{eq:ECollocation}
\cE_C(h, \xib): =  \sum_{\jb\in\Z^d }  \left|\frac{\xib - 2\pi\jb}{h} \right|^\al \widehat{\vphi^\vep}\left(\frac{\xib - 2\pi\jb}{h}\right).  
\end{equation}
We note that $\cE_C(h, \xib)$ represents the Fourier symbol for the RBF collocation scheme. 
%and $\overline{\widetilde{\la} (\xib )} $ is the complex conjugate of $\widetilde{\la} (\xib )$. 
%Now take $v_{\kb} = v(\yb_{\kb})$, we can then write
%\begin{equation*}
%\begin{split}
%\widetilde{v} (\xib) & = \sum_{\kb\in \Z^d} e^{i\kb\cdot\xib } v_{\kb}  = \sum_{\kb\in \Z^d} e^{i\kb\cdot\xib } \sum_{\jb\in \Z^d} \vphi^\vep( \yb_{\kb}- \xb_{\jb}) \la_{\jb}  \\
%&= \sum_{\kb\in \Z^d}   \sum_{\jb\in \Z^d}  e^{i(\kb-\jb)\cdot\xib }  \vphi^\vep( h(\kb-\jb))  e^{i \jb\cdot\xib }     \la_{\jb}    \\ 
%&= \sum_{\kb\in \Z^d}   \sum_{\jb\in \Z^d}  e^{i \kb\cdot\xib }  \vphi^\vep( h\kb  )  e^{i \jb\cdot\xib }     \la_{\jb}  \\
%&= \widetilde{\vphi^\vep}(\xib) \widetilde{\la}(\xib)
%\end{split}
%\end{equation*}
%where $ \widetilde{\vphi^\vep}(\xib):= \sum_{\kb\in \Z^d} e^{i \kb\cdot\xib }  \vphi^\vep( \yb_{\kb} )  $.  Since $\vphi^\vep$ is radially symmetric, $\widetilde{\vphi^\vep}(\xib) $ is real valued.  Therefore 
%\begin{equation*} 
%((v_{\kb}),  r^h (-\Delta)^{\frac{\alpha}{2}} v ))_{l^2}  =  \frac{1}{(2\pi)^d}\int_{(-\pi,\pi)^d} \left|\widetilde{\la}(\xib )\right|^2 { \widetilde{\vphi^\vep}(\xib)}  \cE_C(h, \xib) d\xib. 
%\end{equation*}
Similarly, by the Plancherel theorem
\begin{equation*}
\begin{split}
&\left(  (-\Delta)^{\frac{\alpha}{2}}  v , v\right)_{L^2} = \sum_{\kb, \kb'\in \Z^d}\la_{\kb} \la_{\kb'}  \left( (-\Delta)^{\frac{\alpha}{2}}  \vphi^\vep(\cdot - \yb_{\kb}),\  \vphi^\vep(\cdot - \yb_{\kb'}) \right)_{L^2} \\
&=\frac{1}{(2\pi)^d}  \sum_{\kb, \kb'\in \Z^d}\la_{\kb} \la_{\kb'}   \int_{\R^d}   |\xib|^\al \big|\widehat{\vphi^\vep}(\xib) \big|^2 e^{i (\yb_{\kb}-\yb_{\kb'})\cdot\xib} d\xib  \\
&= \frac{h^{-d}}{(2\pi)^d}\sum_{\kb, \kb'\in \Z^d} \la_{\kb} \la_{\kb'}  \sum_{\jb\in\Z^d} \int_{(-\pi,\pi)^d}   \Big|\frac{\xib - 2\pi\jb}{h} \Big|^\al \Big|\widehat{\vphi^\vep}\Big(\frac{\xib - 2\pi\jb}{h}\Big)\Big|^2 e^{i (\kb- \kb')\cdot \xib} d\xib \\
&=\frac{1}{(2\pi)^d}\int_{(-\pi,\pi)^d}  \big|\widetilde{\la}(\xib ) \big|^2 \cE_G(h, \xib) d\xib,  \end{split} 
\end{equation*} 
where 
\beq
\label{eq:EGalerkin}
\cE_G(h, \xib): =  h^{-d}\sum_{\jb\in\Z^d }  \Big|\frac{\xib - 2\pi\jb}{h} \Big|^\al\Big|\widehat{\vphi^\vep}\Big(\frac{\xib - 2\pi\jb}{h}\Big)\Big|^2 ,
\eeq
and $\cE_G(h, \xib)$ represents the Fourier symbol for the Galerkin scheme.  
The following lemma provides a comparison between $\cE_C(h, \xib)$ and $\cE_G(h, \xib)$.
\begin{lemma}
\label{lem:comparison}
Let $\cE_C(h, \xib)$ and $\cE_G(h, \xib)$ be defined by \cref{eq:ECollocation,eq:EGalerkin}, respectively. Then
\begin{equation*}
\cE_C(h, \xib) \geq (\ga/\pi)^{d/2}\cE_G(h, \xib),
\end{equation*}
where $\ga=(\vep h)^2$.
\end{lemma}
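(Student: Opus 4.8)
The plan is to exploit the explicit and, crucially, strictly positive form of the Gaussian's Fourier transform recorded in \eqref{eq:FTGaussian}, namely $\widehat{\vphi^\vep}(\xib) = \pi^{d/2}\vep^{-d} e^{-|\xib|^2/(4\vep^2)}$, to reduce the comparison of the two series to a pointwise (term-by-term) inequality. Writing $\eta_\jb := (\xib - 2\pi\jb)/h$ for brevity, both $\cE_C(h,\xib) = \sum_{\jb} |\eta_\jb|^\al \widehat{\vphi^\vep}(\eta_\jb)$ and $\cE_G(h,\xib) = h^{-d}\sum_{\jb} |\eta_\jb|^\al |\widehat{\vphi^\vep}(\eta_\jb)|^2$ have nonnegative summands, since $|\eta_\jb|^\al \ge 0$ and $\widehat{\vphi^\vep} > 0$ everywhere. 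This positivity is the one structural feature I would single out at the outset, as it is exactly what legitimizes a term-by-term comparison of the two sums.

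Next I would reduce the desired inequality $\cE_C \ge (\ga/\pi)^{d/2}\cE_G$ to its summand-level version, $|\eta_\jb|^\al \widehat{\vphi^\vep}(\eta_\jb) \ge (\ga/\pi)^{d/2} h^{-d} |\eta_\jb|^\al |\widehat{\vphi^\vep}(\eta_\jb)|^2$ for each $\jb\in\Z^d$. After canceling the common nonnegative factor $|\eta_\jb|^\al \widehat{\vphi^\vep}(\eta_\jb)$ (the inequality being trivial on the indices where this factor vanishes), the claim collapses to the single scalar bound $(\ga/\pi)^{d/2} h^{-d} \widehat{\vphi^\vep}(\eta_\jb) \le 1$, required uniformly in $\jb$ and in $\xib$.

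Finally I would verify this scalar bound directly. Bounding the Gaussian factor by $1$ gives $\widehat{\vphi^\vep}(\eta_\jb) \le \pi^{d/2}\vep^{-d}$, so that $(\ga/\pi)^{d/2} h^{-d}\widehat{\vphi^\vep}(\eta_\jb) \le \ga^{d/2} h^{-d}\vep^{-d}$; substituting $\ga = (\vep h)^2$, hence $\ga^{d/2} = (\vep h)^d$, makes the right-hand side equal to exactly $1$. Summing the verified term-by-term inequality over $\jb\in\Z^d$ then yields the lemma. I do not anticipate a genuine analytic obstacle: the only points that require care are the positivity of $\widehat{\vphi^\vep}$, without which the two series could not be compared summand by summand, and tracking the exact exponent on $\ga$ so that the constant $(\ga/\pi)^{d/2}$ emerges sharp — the term-by-term bound attains equality in the limit $\eta_\jb \to 0$, i.e. where the exponential factor equals $1$.
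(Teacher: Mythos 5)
Your proof is correct and follows essentially the same route as the paper's: both rest on the pointwise bound $h^{-d}\widehat{\vphi^\vep}(\xib) \leq \pi^{d/2}/(\vep h)^d = (\pi/\ga)^{d/2}$ from \cref{eq:FTGaussian}, combined with the positivity of the summands, to compare the two series term by term. Your write-up merely makes explicit the summand-level cancellation that the paper leaves implicit.
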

\begin{proof}
By \cref{eq:FTGaussian}, $h^{-d}\widehat{\vphi^\vep}(\xib) = h^{-d}|\widehat{\vphi^\vep}(\xib)| \leq \pi^{d/2}/(\vep h)^d = (\pi/\ga)^{d/2} $.
Therefore, the inequality is implied from \cref{eq:ECollocation} and \cref{eq:EGalerkin}.
\end{proof}

Additionally, we establish an equivalent norm on $l^2(\Z^d)$. 
For any $(\la_{\kb})\in l^2(\Z^d)$, we define another norm $\| (\la_{\kb})\|_{\vphi}$ by 
\begin{equation*}
\| (\la_{\kb})\|_{\vphi} := \bigg\| \sum_{\kb\in \Z^d} \la_{\kb} \vphi^{\vep}(\cdot - \yb_{\kb})\bigg\|_{L^2(\R^d)}. 
\end{equation*}
Now we show that $\| (\la_{\kb})\|_{l^2}$ and $\| (\la_{\kb})\|_{\vphi}$ are equivalent. 
\begin{lemma}
\label{lem:equivalence_norms}
Let $\ga=(\vep h)^2$ be fixed. 
 $\| \cdot\|_{l^2}$  and   $\| \cdot\|_{\vphi}$ are equivalent, namely, there exist two constants $C_1>0$ and $C_2>0$ independent of $h$ such that
 \begin{equation*}
 C_1 \| (\la_{\kb})\|_{l^2} \leq \| (\la_{\kb})\|_{\vphi} \leq C_2 \| (\la_{\kb})\|_{l^2}, \quad \ \forall (\la_{\kb})\in l^2(\Z^d). 
 \end{equation*}
\end{lemma}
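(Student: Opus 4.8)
The plan is to pass to the Fourier side and reduce the claimed equivalence to a uniform two-sided bound on a Gaussian theta sum. First I would write $v=\sum_{\kb\in\Z^d}\la_{\kb}\vphi^\vep(\cdot-\yb_{\kb})$ and compute, by the Plancherel theorem, $\|(\la_{\kb})\|_\vphi^2=\|v\|_{L^2(\R^d)}^2=\frac{1}{(2\pi)^d}\int_{\R^d}|\widehat{\vphi^\vep}(\xib)|^2\,\big|\sum_{\kb}\la_{\kb}e^{-i h\kb\cdot\xib}\big|^2\,d\xib$, using that the Fourier transform of $\vphi^\vep(\cdot-\yb_{\kb})$ is $e^{-i\yb_{\kb}\cdot\xib}\widehat{\vphi^\vep}(\xib)$. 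The trigonometric factor is $(2\pi/h)$-periodic in each coordinate, so—exactly as in the computation leading to \cref{eq:EGalerkin}—I would fold the integral over $\R^d$ onto the fundamental cube $(-\pi,\pi)^d$ to obtain
\begin{equation*}
\|(\la_{\kb})\|_\vphi^2=\frac{1}{(2\pi)^d}\int_{(-\pi,\pi)^d}|\widetilde{\la}(\xib)|^2\,S(h,\xib)\,d\xib,\qquad S(h,\xib):=h^{-d}\sum_{\jb\in\Z^d}\Big|\widehat{\vphi^\vep}\Big(\tfrac{\xib-2\pi\jb}{h}\Big)\Big|^2 .
\end{equation*}
On the other hand, Parseval's identity for Fourier series gives $\|(\la_{\kb})\|_{l^2}^2=\frac{h^d}{(2\pi)^d}\int_{(-\pi,\pi)^d}|\widetilde{\la}(\xib)|^2\,d\xib$. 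Hence the lemma follows at once once I show that $S(h,\xib)$ is comparable to $h^d$ uniformly in $\xib\in(-\pi,\pi)^d$ and in $h$.

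To that end I would insert the explicit Gaussian transform \cref{eq:FTGaussian}, $\widehat{\vphi^\vep}(\xib)=\pi^{d/2}\vep^{-d}e^{-|\xib|^2/(4\vep^2)}$, which yields $|\widehat{\vphi^\vep}(\tfrac{\xib-2\pi\jb}{h})|^2=\pi^d\vep^{-2d}e^{-|\xib-2\pi\jb|^2/(2\ga)}$ since $\ga=(\vep h)^2$. Using $\vep^{-2d}=h^{2d}\ga^{-d}$ this simplifies to
\begin{equation*}
S(h,\xib)=h^d\,\frac{\pi^d}{\ga^d}\,\Theta(\xib),\qquad \Theta(\xib):=\sum_{\jb\in\Z^d}e^{-|\xib-2\pi\jb|^2/(2\ga)} .
\end{equation*}
The prefactor $\tfrac{\pi^d}{\ga^d}\Theta(\xib)$ no longer depends on $h$ precisely because $\ga$ is held fixed, which is the role played by the coupling $\vep h=\mathrm{const}$.

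It then remains to bound $\Theta$ from above and below on $(-\pi,\pi)^d$. The upper bound is immediate: $\Theta$ is a convergent, $2\pi$-periodic, continuous function, hence $\Theta(\xib)\le\max_{[-\pi,\pi]^d}\Theta=:\bar c<\infty$. For the lower bound I would simply retain the $\jb=\bm{0}$ term and use that $|\xib|\le\sqrt d\,\pi$ on the cube, giving $\Theta(\xib)\ge e^{-|\xib|^2/(2\ga)}\ge e^{-d\pi^2/(2\ga)}=:\underline c>0$. Combining these, $\underline c\,\pi^d\ga^{-d}\,h^d\le S(h,\xib)\le\bar c\,\pi^d\ga^{-d}\,h^d$, and substituting this into the two integral identities above yields the asserted equivalence with $C_1=(\underline c\,\pi^d\ga^{-d})^{1/2}$ and $C_2=(\bar c\,\pi^d\ga^{-d})^{1/2}$, both independent of $h$.

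I do not expect a serious obstacle here; the only technical points to watch are the justification of the folding step (the interchange of summation and integration, valid by absolute convergence of the Gaussian sum together with dominated convergence) and the fact that $v$ genuinely belongs to $L^2(\R^d)$ when $(\la_{\kb})\in l^2(\Z^d)$. The latter is cleanest to handle by first establishing both inequalities for finitely supported sequences $(\la_{\kb})$—for which every manipulation is trivially legitimate—and then extending to all of $l^2(\Z^d)$ by density, with the upper bound guaranteeing that the series defining $v$ indeed converges in $L^2(\R^d)$.
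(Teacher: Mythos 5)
Your proof is correct and follows essentially the same route as the paper's: Parseval on the $l^2$ side, Plancherel plus folding onto $(-\pi,\pi)^d$ on the $\varphi$ side, the explicit Gaussian Fourier transform to expose the $h$-independent periodized symbol, and a two-sided bound on the resulting theta sum. The only difference is that you explicitly supply the upper bound (by periodicity and continuity) and the lower bound (by retaining the $\jb=\bm{0}$ term), which the paper dismisses as "easy to see," and you add the density argument for general $l^2$ sequences—both welcome but not substantive departures.
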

\begin{proof}
By Parseval's identity, we have 
\begin{equation*}
\| (\la_{\kb})\|^2_{l^2} = h^d \sum_{\kb\in\Z^d} \la_{\kb}^2 = \frac{h^d}{(2\pi)^d}\int_{(-\pi, \pi)^d} \big|\widetilde{\la}(\xib)\big|^2 d\xib . 
\end{equation*}
On the other hand, similar to the argument made previously, we have
\begin{equation*}
\begin{split}
\| (\la_{\kb})\|^2_{\vphi} = &\sum_{\kb, \kb'\in \Z^d}\la_{\kb} \la_{\kb'}  \left( \vphi^\vep(\cdot - \yb_{\kb}) , \vphi^\vep(\cdot - \yb_{\kb'}) \right)_{L^2}\\
=&\frac{h^d}{(2\pi)^d}\int_{(-\pi,\pi)^d}  \big|\widetilde{\la}(\xib ) \big|^2 \cE_\vphi(h, \xib) d\xib,
\end{split}
\end{equation*}
where $\cE_\vphi(h, \xib)  = h^{-2d}\sum_{\jb\in\Z^d }  \left|\widehat{\vphi^\vep}\left(\frac{\xib - 2\pi\jb}{h}\right)\right|^2 $. Notice that by \cref{eq:FTGaussian}, 
\begin{equation*}
\Big|\widehat{\vphi^\vep}\Big(\frac{\xib - 2\pi\jb}{h}\Big) \Big|^2= \frac{\pi^d}{\vep^{2d}} e^{-\frac{|\xib - 2\pi \jb |^2}{2 (\vep h)^2}}.
\end{equation*}
Therefore $\cE_\vphi(h, \xib)$ is independent of $h$ and 
\begin{equation*}
\cE_\vphi(h, \xib) = \cE_\vphi(\xib):=(\pi/\ga)^d\sum_{\jb\in\Z^d} e^{-\frac{|\xib - 2\pi \jb |^2}{2\ga}}. 
\end{equation*}
It is easy to see that for $\xib\in (-\pi, \pi)^d$, there exists $C_1>0$ and $C_2>0$ such that
\begin{equation*}
C_1 \leq \cE_\vphi(\xib) \leq C_2. 
\end{equation*}
The equivalence of norms is thus shown.  
\end{proof}

Finally, we establish the numerical stability of the Gaussian RBF collocation scheme. The main idea of the proof follows \cite[Theorem 4.1]{LTTF21}.  
\begin{theorem}[numerical stability]
\label{thm:stability}
Let $\ga = (\vep h)^2$ be fixed. For any $v\in V_h$, we have \begin{equation*}
\|v\|_{L^2(\R^d)} \leq C \| r^h_{\Om} (-\Delta)^{\frac{\alpha}{2}} v\|_{l^2}, \end{equation*}
where $C>0$ is a constant independent of $h$ and $\vep$.
\end{theorem}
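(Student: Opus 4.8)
The plan is to chain together the three structural results already in hand---the fractional Poincar\'e inequality (\Cref{prop:poincare}), the symbol comparison (\Cref{lem:comparison}), and the norm equivalence (\Cref{lem:equivalence_norms})---following the Galerkin-versus-collocation strategy of \cite[Theorem 4.1]{LTTF21}. Write $v = \sum_{\kb\in\Z^d} \la_{\kb}\vphi^\vep(\cdot - \yb_{\kb})$, where the sequence $(\la_{\kb})$ is finitely supported on those grid indices with $\yb_{\kb}\in\Om$, so in particular $(\la_{\kb})\in l^2(\Z^d)$. The starting point is the identity derived just before \cref{eq:ECollocation}, namely
\begin{equation*}
\big((\la_{\kb}),\, r^h (-\Delta)^{\frac{\alpha}{2}} v\big)_{l^2} = \frac{1}{(2\pi)^d}\int_{(-\pi,\pi)^d} \big|\widetilde{\la}(\xib)\big|^2 \cE_C(h,\xib)\, d\xib,
\end{equation*}
together with the analogous identity for the Galerkin form $\big((-\Delta)^{\frac{\alpha}{2}} v, v\big)_{L^2}$ expressed through $\cE_G(h,\xib)$.

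First I would produce a lower bound on the collocation pairing. Applying \Cref{lem:comparison} under the integral sign gives
\begin{equation*}
\big((\la_{\kb}),\, r^h (-\Delta)^{\frac{\alpha}{2}} v\big)_{l^2} \geq (\ga/\pi)^{d/2}\big((-\Delta)^{\frac{\alpha}{2}} v, v\big)_{L^2} = (\ga/\pi)^{d/2}\,|v|_{H^{\alpha/2}(\R^d)}^2,
\end{equation*}
and then the Poincar\'e inequality \cref{eq:poincare}, with constant $C_P$ from \Cref{prop:poincare}, converts the seminorm into the full norm, yielding $\big((\la_{\kb}),\, r^h (-\Delta)^{\frac{\alpha}{2}} v\big)_{l^2} \geq C_P^{-1}(\ga/\pi)^{d/2}\|v\|_{L^2(\R^d)}^2$. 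Because $\ga=(\vep h)^2$ is held fixed, the prefactor here is a genuine constant independent of both $h$ and $\vep$.

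Next I would bound the same pairing from above. The crucial observation is that, since $\la_{\kb}=0$ whenever $\yb_{\kb}\notin\Om$, the scaled $l^2$ pairing against the full restriction $r^h (-\Delta)^{\frac{\alpha}{2}} v$ coincides with the pairing against the finite restriction $r^h_\Om (-\Delta)^{\frac{\alpha}{2}} v$. Cauchy--Schwarz in $l^2$ then gives
\begin{equation*}
\big((\la_{\kb}),\, r^h_\Om (-\Delta)^{\frac{\alpha}{2}} v\big)_{l^2} \leq \|(\la_{\kb})\|_{l^2}\,\|r^h_\Om (-\Delta)^{\frac{\alpha}{2}} v\|_{l^2},
\end{equation*}
and \Cref{lem:equivalence_norms} replaces $\|(\la_{\kb})\|_{l^2}$ by $C_1^{-1}\|(\la_{\kb})\|_\vphi = C_1^{-1}\|v\|_{L^2(\R^d)}$. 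Combining the lower and upper bounds and cancelling one factor of $\|v\|_{L^2(\R^d)}$ delivers the estimate with a constant of the form $C_P C_1^{-1}(\pi/\ga)^{d/2}$.

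Once the three lemmas are granted, the argument is essentially bookkeeping. The two points I would be most careful about are: the passage from $r^h$ to $r^h_\Om$ in the pairing, which hinges precisely on $(\la_{\kb})$ being supported inside $\Om$; and the verification that every constant entering the chain---$C_P$ from Poincar\'e, $C_1$ from the norm equivalence, and $(\pi/\ga)^{d/2}$ from the comparison---is genuinely $h$- and $\vep$-independent, which is exactly where freezing $\ga=(\vep h)^2$ is used. I expect the symbol comparison step, together with checking that the comparison survives integration against $|\widetilde{\la}(\xib)|^2$, to be the conceptual heart, while the remaining manipulations are routine.
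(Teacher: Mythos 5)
Your proposal is correct and follows essentially the same route as the paper's proof: both chain the norm equivalence of \Cref{lem:equivalence_norms}, Cauchy--Schwarz in $l^2$, the passage from $r^h_\Om$ to $r^h$ via the support of $(\la_{\kb})$, the symbol comparison of \Cref{lem:comparison}, and finally the Poincar\'e inequality of \Cref{prop:poincare}. The only difference is presentational---you split the argument into a lower and an upper bound on the pairing $((\la_{\kb}), r^h(-\Delta)^{\frac{\alpha}{2}}v)_{l^2}$ and cancel a factor of $\|v\|_{L^2(\R^d)}$, while the paper writes the same chain of inequalities in a single display.
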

\begin{proof}
For any $v\in V_h$, one can write 
\begin{equation*}
v(\xb) = \sum_{\kb\in\Z^d} \la_{\kb} \vphi^\vep(\xb - \yb_{\kb} )
\end{equation*}
where $\la_{\kb}=0$ if $\yb_{\kb}\notin\Om$. By Cauchy-Schwartz inequality in $l^2(\R^d)$, \Cref{lem:equivalence_norms} and \Cref{lem:comparison}, we obtain
\begin{equation*}
\begin{split}
\|v \|_{L^2(\R^d)} \cdot \| r^h_{\Om} (-\Delta)^{\frac{\alpha}{2}} v\|_{l^2} = &
\|(\la_{\kb}) \|_{\vphi} \cdot \| r^h_{\Om} (-\Delta)^{\frac{\alpha}{2}} v\|_{l^2} \\
\geq &C ((\la_{\kb}), r^h_{\Om} (-\Delta)^{\frac{\alpha}{2}} v)_{l^2}\\
= & C ((\la_{\kb}), r^h (-\Delta)^{\frac{\alpha}{2}} v)_{l^2}  \\
\geq & C' \left(  (-\Delta)^{\frac{\alpha}{2}}  v , v\right)_{L^2}.
\end{split}
\end{equation*}
Now invoking the Poincar\'e inequality established in \Cref{prop:poincare}, we obtain the desired result. 
\end{proof}

 \subsection{Convergence of the numerical scheme}
\label{subsection-convergence}
We present the convergence analysis of the Gaussian RBF collocation method introduced in \Cref{section-method}. 
\begin{theorem}[convergence]\label{them-conv}
Let $u$ be the exact solution and $u_h$ be the numerical solution. For $h\in (0,1)$, the following estimate holds. 
\begin{equation*}
 \| u - u_h\|_{L^2(\Om)} \leq C\bigg(h^{M-2} |u|_{M} +\sum_{|\bm\beta|\in\{ 0,2\}}\sup_{\xb\in\R^d}\sum_{|\bm\al|=0}^{M-1}\frac{|a^{(\bm\beta)}_{\bm\al}(\xb/h)|}{\bm\al !} h^{|\bm\al|-|\bm\beta|} |D^{\bm\al} u(\xb)| \bigg).
\end{equation*}
\end{theorem}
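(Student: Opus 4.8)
The plan is to combine the two pillars already established in this section, namely the consistency (truncation) error of \Cref{thm:consistency} and the numerical stability of \Cref{thm:stability}, via a standard stability-plus-consistency argument. The natural candidate to bridge the exact solution $u$ and the numerical solution $u_h$ is the RBF interpolant $\cI_h u$ studied in \Cref{subsection-truncation}. First I would observe that both $\cI_h u$ and $u_h$ belong to $V_h$, so their difference $e_h := \cI_h u - u_h$ is an element of $V_h$ to which the Poincar\'e-based stability estimate of \Cref{thm:stability} applies. Writing $\|u - u_h\|_{L^2(\Om)} \leq \|u - \cI_h u\|_{L^2(\Om)} + \|\cI_h u - u_h\|_{L^2(\Om)}$ by the triangle inequality, the first term is controlled directly by \Cref{lem:approx_err} (the $\bm\beta = \bm0$ case), while the second term requires the stability estimate.

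The key step is estimating $\|e_h\|_{L^2(\R^d)}$. By \Cref{thm:stability}, $\|e_h\|_{L^2(\R^d)} \leq C \| r^h_\Om (-\Delta)^{\frac{\al}{2}} e_h\|_{l^2}$. The point now is to recognize that $(-\Delta)^{\frac{\al}{2}} e_h$ evaluated at the collocation points is exactly the residual: since $u_h$ solves the collocation system \eqref{discrete-Poisson}, we have $(-\Delta)^{\frac{\al}{2}} u_h(\xb_j) = f(\xb_j) = (-\Delta)^{\frac{\al}{2}} u(\xb_j)$ at every collocation point $\xb_j \in \Om$. Hence at these points $(-\Delta)^{\frac{\al}{2}} e_h(\xb_j) = (-\Delta)^{\frac{\al}{2}}\cI_h u(\xb_j) - (-\Delta)^{\frac{\al}{2}} u(\xb_j)$, which is precisely the pointwise truncation error bounded in \Cref{thm:consistency}. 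Therefore $\| r^h_\Om (-\Delta)^{\frac{\al}{2}} e_h\|_{l^2} \leq |\Om|^{1/2} \|(-\Delta)^{\frac{\al}{2}} u - (-\Delta)^{\frac{\al}{2}}\cI_h u\|_{L^\infty(\R^d)}$, using that the scaled $l^2$ norm over the finitely many grid points in $\Om$ is dominated by $|\Om|^{1/2}$ times the $L^\infty$ norm. Plugging in \Cref{thm:consistency} yields the bound on $\|e_h\|_{L^2}$ in terms of the two consistency-error contributions.

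Finally I would collect the pieces. The interpolation term $\|u - \cI_h u\|_{L^2(\Om)}$ is of the same form as (indeed dominated by) the right-hand side already appearing in \Cref{thm:consistency} via \Cref{lem:approx_err} with $\bm\beta = \bm0$, so combining it with the stability estimate for $\|e_h\|_{L^2}$ gives a single bound of the stated shape, with all constants absorbed into $C$ since $h \in (0,1)$ and $\ga = (\vep h)^2$ is held fixed. I expect the main obstacle to be the careful accounting of the residual identity, that is, verifying cleanly that the collocation equations force the restricted residual $r^h_\Om(-\Delta)^{\frac{\al}{2}} e_h$ to coincide with the restricted consistency error and controlling the passage from the finite scaled $l^2$ norm over grid points in $\Om$ to the global $L^\infty$ truncation bound; one must ensure the number of collocation points scales like $|\Om| h^{-d}$ so that $h^d \sum_{\xb_j \in \Om} (\cdot)^2 \lesssim |\Om| \,\|\cdot\|_{L^\infty}^2$ with an $h$-independent constant. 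The regularity hypothesis $|u|_M < \infty$ is inherited directly from \Cref{thm:consistency}, so no new smoothness requirements arise.
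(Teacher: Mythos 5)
Your proposal is correct and follows essentially the same route as the paper's own proof: the triangle inequality through the interpolant $\cI_h u$, the residual identity $r^h_\Om(-\Delta)^{\frac{\al}{2}}u = r^h_\Om(-\Delta)^{\frac{\al}{2}}u_h$ from the collocation equations, stability (\Cref{thm:stability}) applied to $\cI_h u - u_h$, consistency (\Cref{thm:consistency}) for the restricted residual, and \Cref{lem:approx_err} with $\bm\beta = \bm 0$ for the interpolation term. If anything, you are slightly more explicit than the paper about the passage from the scaled $l^2$ norm over the grid points in $\Om$ to the global $L^\infty$ truncation bound, which the paper writes down without comment.
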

\begin{proof}
Notice that for any $\yb_{\kb} \in \Om$,
\begin{equation*}
(-\Delta)^{\frac{\al}{2}} u (\yb_{\kb}) = f(\yb_{\kb}) = (-\Delta)^{\frac{\al}{2}} u_h (\yb_{\kb}). 
\end{equation*}
In other words, $r^h_{\Om} (-\Delta)^{\frac{\al}{2}} u = r^h_{\Om} (-\Delta)^{\frac{\al}{2}} u_h$. 
Using this fact and \Cref{thm:stability}, we obtain
\begin{equation*}
\|\cI_h u - u_h \|_{L^2(\R^d)} \leq C \| r^h_\Om (-\Delta)^{\frac{\al}{2}} (\cI_h u - u_h)\|_{l^2} 
= C \| r^h_\Om (-\Delta)^{\frac{\al}{2}} \cI_h u - r^h_\Om (-\Delta)^{\frac{\al}{2}}  u\|_{l^2}.
\end{equation*}
 For any $\yb_{\kb} \in \Om$, we use
\Cref{thm:consistency} to get a bound of $|(-\Delta)^{\frac{\al}{2}} (\cI_h u - u)(\yb_{\kb})|$. Therefore, 
\begin{equation*}
\begin{split}
&\|\cI_h u - u_h \|_{L^2(\R^d)} \leq \, C\| r^h_\Om (-\Delta)^{\frac{\al}{2}} \cI_h u - r^h_\Om (-\Delta)^{\frac{\al}{2}}  u\|_{l^2} \\
\leq & \,
C' \Bigg(h^{M-2} |u|_{M} +\sum_{|\bm\beta|\in\{ 0,2\}}\sup_{\xb\in\R^d}\sum_{|\bm\al|=0}^{M-1}\frac{|a^{(\bm\beta)}_{\bm\al}(\xb/h)|}{\bm\al !} h^{|\bm\al|-|\bm\beta|} |D^{\bm\al} u(\xb)| \Bigg).
\end{split}
\end{equation*}
On the other hand, using \Cref{lem:approx_err}
\begin{equation*}
\begin{split}
\|  u -\cI_h u\|_{L^2(\Om)}\leq &\, \sqrt{|\Om|}\|  u -\cI_h u\|_{L^\infty(\Om)} \\
\leq &\, C \Bigg(h^M |u|_{M} + \sup_{\xb\in\R^d}\sum_{|\bm\al|=0}^{M-1}\frac{|a^{(0)}_{\bm\al}(\xb/h)|}{\bm\al !} h^{|\bm\al|} |D^{\bm\al} u(\xb)|\Bigg).
\end{split}
\end{equation*}
Combining the above estimates above and the triangle inequality $
\| u -u_h\|_{L^2(\Om)} \leq \| u -\cI_h u \|_{L^2(\Om)} + \| \cI_h u -u_h\|_{L^2(\Om)}$, we obtain the desired estimate. 
\end{proof}

From the above theorem, we notice that the saturation error depends on the coefficient  $ |a^{(\bm\beta)}_{\bm\al}(\xb)|/\bm\al!$ for $|\bm\beta|\in\{0,2\}$. Since obtaining a direct estimate of this coefficient is challenging, we resort to numerical approximations, as detailed in \Cref{appendix}. The numerical results demonstrate that the saturation error tends to increase as $\gamma$ grows. In addition, within a specific range of $\gamma$, one can effectively manage the saturation error to remain small.

% =======================================
% Section: 
% =======================================
\section{Numerical experiments}
\label{section-experiments} 

%%%% %%%%
To further evaluate our method, we numerically solve the fractional Poisson equation (\ref{Poisson}) with homogeneous Dirichlet boundary conditions, i.e. $u(\bx) = 0$ for $\bx \in \Og^c$.
%The RBF center points are chosen to be {\color{blue} uniformly distributed} on the domain $\Omega$. 
For notational simplicity, we denote $c^* = \sqrt{\gamma}=\vep h$ here and in the subsequent sections. 
Hence,  the shape parameter $\varepsilon = c^*/h$ increases as $h$ decreases, if $c^*$ is a fixed constant.  
The root mean square (RMS) error in the solution of the Poisson equation \cref{Poisson} is defined as 
\begin{equation*}
\|e_u\|_{\rm rms} = \bigg(\fl{1}{\bar{M}}\sum_{k = 1}^{\bar{M}}\big|u(\xb_k) - u_h(\xb_k)\big|^2\bigg)^{\fl{1}{2}},
\end{equation*}
where $u$ and $u_h$ represent the exact and numerical solutions, respectively, and 
$\bar{M} \gg N$ denotes the total number of interpolation points on $\bar{\Og}$. In the following, we present 1D and 2D numerical examples for solving \cref{Poisson}.
\bigskip 

%%%% %%%% %%%% %%%% Example 4.1
\noindent{\bf Example 1.}
Consider the 1D fractional Poisson equation \cref{Poisson} with $g\equiv 0$, $\Omega = (-1, 1)$, and the right-hand side function
\begin{equation}\label{ex1-f}
f(x) = \frac{2^\ap\Gamma\big((\ap+1)/2\big)\Gamma\big(s+1\big)}{\sqrt{\pi}\Gamma\big(s+1-\ap/2\big)}\,_2F_1\Big(\frac{\ap+1}{2}, -s+\frac{\ap}{2}; \frac{1}{2}; x^2\Big),  
\end{equation}
where constant $s > 0$, and $_2F_1$ represents the Gauss hypergeometric function. 
Then the exact solution of \cref{Poisson} with \cref{ex1-f} is given by $u(x) = [(1-x^2)_+]^s$ for $s > 0$. 

%%%% %%%%
In \cref{Tab4-2-1}, we present numerical errors $\|e_u\|_{\rm rms}$ and condition numbers ${\mathcal K}$ for different $\alpha$ and number of points $N$, where the RBF center points are uniformly spaced on $(-1, 1)$ with $h = 2/(N+1)$. It shows that numerical errors reduce quickly as the number of points $N$ increases.  
%%%% %%%% %%%% %%%% Table Starts
\begin{table}[htbp]
\begin{center} 
\begin{tabular}{|c|c|c|c|c|c|c|}
\hline
 & \multicolumn{2}{|c|}{$\ap = 0.4$} & \multicolumn{2}{|c|}{$\ap = 1$} & \multicolumn{2}{|c|}{$\ap = 1.5$} \\%& \multicolumn{2}{|c|}{$\ap = 2$}   \\
\cline{2-7}
$N$     & $\|e_u\|_{\rm rms}$ & $\mathcal{K}$ & $\|e_u\|_{\rm rms}$ & $\mathcal{K}$ & $\|e_u\|_{\rm rms}$ & $\mathcal{K}$ \\%& $\|e\|_{\rm rms}$ & $\mathcal{K}$ \\ 
\hline
 7 & 1.971e-3  & 288.61  & 5.773e-3  &141.17 &  2.612e-2 &  82.194  \\% & 7.912e-2  & 54.416\\
15 & 3.812e-4 &  1586.6 &  1.066e-3 &  627.15 &  1.583e-3 &  331.69 \\%  & 1.272e-3  & 191.82\\
31 & 2.509e-5 &  2938.4 &  8.403e-5& 1086.0 & 1.708e-4 &  551.78 \\%  & 2.794e-4  & 304.55\\
63 & 1.273e-6  & 3447.9 & 4.773e-6  & 1258.4  & 1.146e-5  & 632.09 \\%  &  2.356e-5 &  345.54\\
127 & 5.899e-8 &  3584.0 &  2.431e-7&  1304.5  & 6.725e-7  & 653.69 \\%  &  1.656e-6  & 634.98\\
255 & 2.616e-9  & 3617.8  & 1.181e-8 &  1315.9  & 3.740e-8 & 659.11  \\%  & 1.091e-7  & 2490.9\\
511 & 1.12e-10 &  3626.1 & 5.59e-10  & 1318.8  & 2.027e-9  & 982.86 \\%   & 6.985e-9  & 9867.3\\
\hline
\end{tabular}
\caption{RMS errors $\|e_u\|_{\rm rms}$ and condition numbers $\mathcal K$ in solving Poisson problem \cref{Poisson} with exact solution $u(x) = [(1-x^2)_+]^4$,  where  $c^* = 1/2$  and thus $\vep = (N+1)/4$.}\label{Tab4-2-1}
\end{center}
\end{table}
%%%% %%%% Table ends
Compared to the method in \cite{Burkardt2021}, the condition numbers are significantly smaller, 
suggesting the effectiveness of our method in addressing the ill-conditioning issue of RBF-based methods. 
%%%% %%%%
\Cref{Fig-Ex1-1} further compares the numerical errors for different $s$. 
The larger the value of $s$,  the smoother the function $u$ around $\p\Og$, the smaller the numerical errors. 
%%%% %%%% Figure Starts 
\begin{figure}[htb!]
\centerline{
\includegraphics[height = 4.86cm, width = 6.56cm]{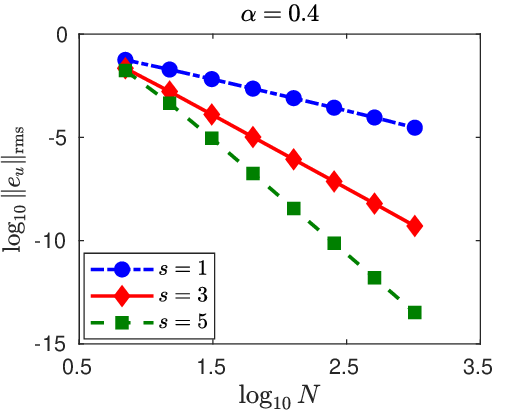}\hspace{-4mm}
\includegraphics[height = 4.86cm, width = 6.56cm]{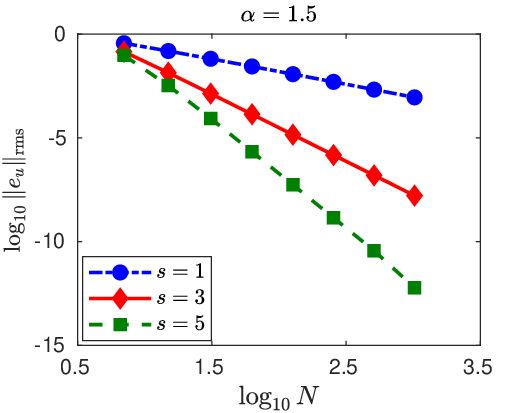}
}
\caption{Numerical errors in solving Poisson problem (\ref{Poisson}) with exact solution $u(x) = [(1-x^2)_+]^s$,  where  $c^* = 1/2$  and thus $\vep = (N+1)/4$.}
\label{Fig-Ex1-1}
\end{figure}
%%%% %%%% Figure Ends 
From \cref{Tab4-2-1} and \cref{Fig-Ex1-1}, we find that numerical errors $\|e_u\|_{\rm rms}$ decrease at a order of $O(h^p)$ with $p > s$.  
 
%%%% %%%%
We set $c^* = 1/2$ in the above studies.  
In \cref{Fig-Ex1-2}, we further explore the effects of $c^*$ on numerical errors and condition numbers. 
%%%% %%%% Figure Starts %%%% %%%%
\begin{figure}[htb!]
\centerline{
(a)\includegraphics[height = 4.86cm, width = 6.56cm]{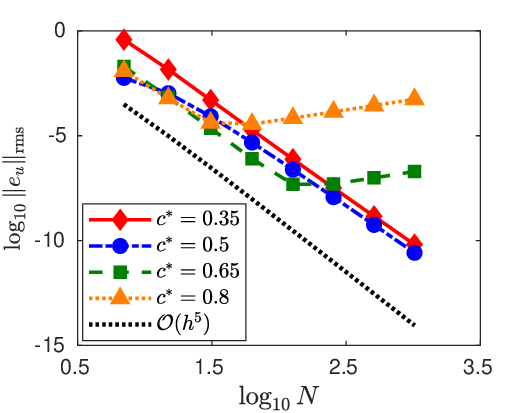}\hspace{-6mm}
(b)\includegraphics[height = 4.86cm, width = 6.56cm]{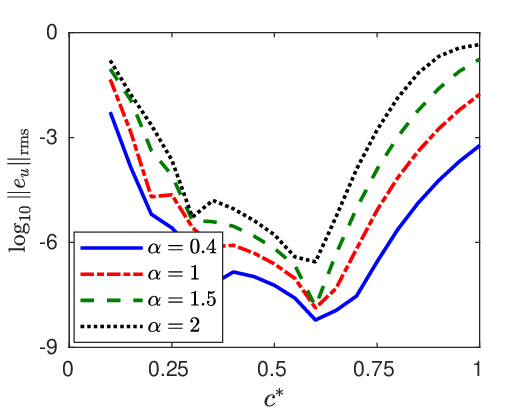}
}
\centerline{
(c)\includegraphics[height = 4.86cm, width = 6.56cm]{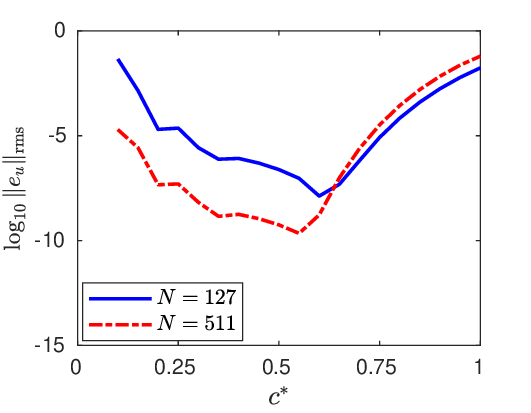}\hspace{-6mm}
(d)\includegraphics[height = 4.86cm, width = 6.56cm]{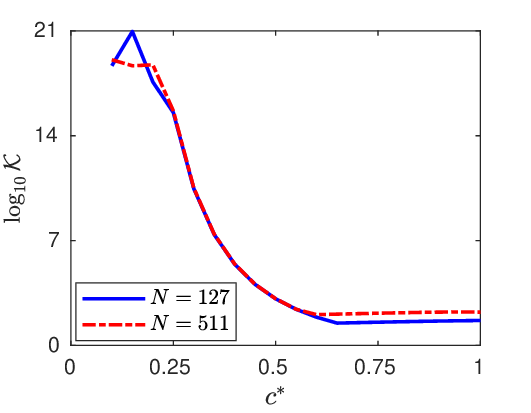}
}
\caption{Comparison of numerical errors for different $c^*$ in solving problem (\ref{Poisson}) with exact solution $u(x) = [(1-x^2)_+]^4$, where $\ap = 1$ in (a)--(c), and $N = 127$ in (d).}
\label{Fig-Ex1-2}
\end{figure}
%%%% %%%% Figure Ends 
\Cref{Fig-Ex1-2} (a) shows that the saturation error becomes dominant if the number of points $N > N_{\rm cr}$, where $N_{\rm cr}$ is a threshold value that depends on $c^\ast$.
The larger the value of $c^*$, the smaller the threshold $N_{\rm cr}$, and the bigger the saturation error, consistent with our analysis of the saturation error in \Cref{appendix}. 
Moreover, the choice of $c^*$ shows insignificant dependence on the power $\ap$; see \cref{Fig-Ex1-2} (b). 
From \cref{Fig-Ex1-2} (c) \& (d), we find that numerical errors are large when $c^*$ is either too small or too large. 
Small constant $c^*$ leads to large condition number, introducing large errors in solving resulting system. 
Choosing large $c^*$ can greatly reduce the conditional number, but at the same time it also presents large saturation errors. 
Hence, the constant $c^*$ should be chosen from a proper range to balance the saturation errors and conditional numbers. 

\bigskip
%%%% %%%% %%%% %%%% Example 4.2
\noindent{\bf Example 2.} Consider the 2D fractional Poisson equation \cref{Poisson} with $g\equiv0$ and a disk domain $\Og = \{\bx \,|\, x^2 + y^2 < 1\}$. 
The function $f$ in \cref{Poisson} is chosen as
\begin{equation}\label{2Df0}
f(x, y) = \frac{\alpha\,2^{\ap-1}\Gamma(\alpha/2)(4!)}{\Gamma(5-\alpha/2)}\,_2F_1\Big(\frac{\alpha}{2}+1,\,\frac{\alpha}{2}-4;\, 1;\,  (x^2+y^2)^2\Big),
\end{equation}
for $(x, y)\in \Og$. 
In this case, the exact solution is a compact support function on the unit disk, i.e. $u(\xb) = [(1-|\xb|^2)_+]^4$. 
The RBF center points are chosen as uniform grid points within the domain $\Og$ with $h = h_x = h_y$.

%%%% %%%% 
\Cref{Tab-Ex3} presents numerical errors $\|e_u\|_{\rm rms}$ for different $\ap$ and $h$, where constant $c^* = 1/2$ is fixed.  
%%%% %%%% Table Starts
\begin{table}[htbp]
\begin{center} 
\begin{tabular}{|c|c|c|c|c|c|c|c|}
\hline
 &  & \multicolumn{2}{|c|}{$\ap = 0.4$} & \multicolumn{2}{|c|}{$\ap = 1$} & \multicolumn{2}{|c|}{$\ap = 1.5$} \\%& \multicolumn{2}{|c|}{$\ap = 2$}   \\
\cline{3-8}
$N$ & $h$ & $\|e_u\|_{\rm rms}$ & $\mathcal{K}$ & $\|e_u\|_{\rm rms}$ & $\mathcal{K}$ & $\|e_u\|_{\rm rms}$ & $\mathcal{K}$ \\%& $\|e\|_{\rm rms}$ & $\mathcal{K}$ \\ 
\hline
45  & $1/4$ &5.898e-3 &1.207e5 &2.288e-2 &4.427e4 &4.975e-2 &2.321e4\\
193 & $1/8$ &3.370e-4 &3.855e6 &1.299e-3 &1.245e6 &3.405e-3 &5.684e5\\
793 & $1/16$&1.497e-5 &1.774e7 &5.504e-5 &5.374e6 &1.757e-4 &2.307e6\\
3205& $1/32$&7.405e-7 &2.696e7 &2.377e-6 &8.001e6 &6.441e-6 &3.382e6\\
\hline
\end{tabular}
\caption{RMS errors $\|e_u\|_{\rm rms}$  in solving the two-dimensional Poisson equation (\ref{Poisson}) with (\ref{2Df0}),  where constant $c^* = 1/2$ is fixed.}\label{Tab-Ex3}
\end{center}
\vspace{-.3cm}
\end{table}
%%%% %%%%
The observations here are similar to those in Example 1 -- numerical errors decrease with a rate of $O(h^p)$ for $p > 4$. 
Moreover, \Cref{Fig-Ex2-1} compares numerical errors for different $c^*$ and $N$. 
Similar to one-dimensional cases, the saturation errors become dominant if $N$ becomes too large, i.e., $N < N_{\rm cr}$ for some threshold value $N_{\rm cr}$.
We consistently observe that $N_{\rm cr}$ decreases as $c^\ast$ increases.
%%%% %%%% Figure Starts %%%% %%%%
\begin{figure}[htb!]
\centerline{
(a)\includegraphics[height = 4.86cm, width = 6.56cm]{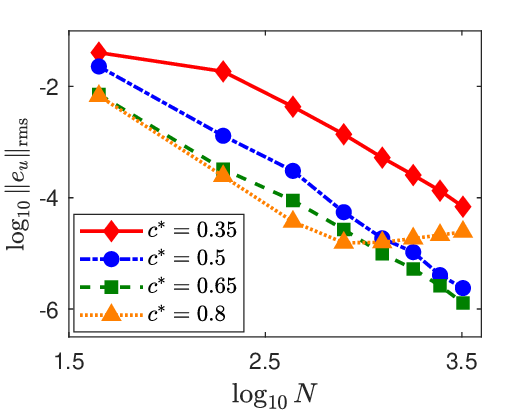}\hspace{-6mm}
(b)\includegraphics[height = 4.86cm, width = 6.56cm]{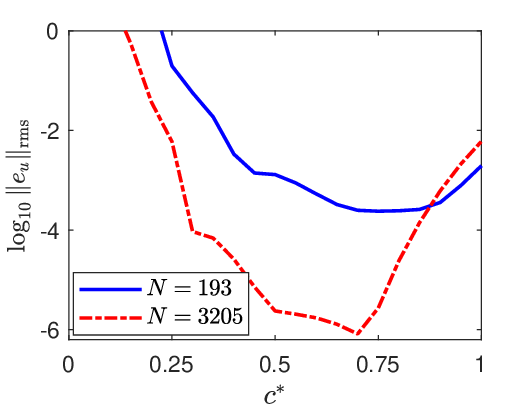}}
\caption{Comparison of numerical errors for different  $c^*$ in solving problem (\ref{Poisson}) with $\ap = 1$ on a unit disk, where the exact solution $u(\xb) = [\big(1-|\xb|^2\big)_+]^4$.}
\label{Fig-Ex2-1}
\vspace{-.3cm}
\end{figure}
%%%% %%%% Figure Ends 
Comparing \cref{Fig-Ex1-2} and \cref{Fig-Ex2-1}, we find that regardless of the dimension $d$, the critical distance $h_{\rm cr}$ -- where the saturation error becomes dominant -- remains consistent for a given $c^*$. 

\bigskip 
%%%% %%%% %%%% %%%% Example 4.2
\noindent{\bf Example 3.} In this example, we solve the 2D Poisson problem \cref{Poisson} with $g\equiv 0$ on a square domain $\Og = (-2, 2)^2$. 
The function $f$ in (\ref{Poisson}) is chosen as 
\begin{equation}\label{2Df}
f(x, y) = 6^\ap\Gamma\big(2+\frac{\ap}{2}\big) \,_1F_1\Big(2+\frac{\ap}{2};\, 2;\, -9|\xb|^2\Big) y. 
\end{equation}
The exact solution of (\ref{Poisson}) with (\ref{2Df}) can be  approximated by $u(x, y) = y e^{-9(x^2+y^2)}$ for $(x, y) \in \Og$. 
In our simulations, the RBF center points are chosen as equally spaced points with $h = h_x = h_y = 4/(N_x+1)$. 
Hence, the total number of points is $N = N_x^2$ with $N_x$ the number of points in $x$-direction.

%%%% %%%%
\Cref{Tab4-2-2} presents numerical errors $\|e_u\|_{\rm rms}$  for different $\alpha$, where $c^* = 1/2$ is fixed. 
The solution in this case is much smoother than that in Example 2. 
%%%% %%%% Table Starts
\begin{table}[htb!]
\begin{center} 
\begin{tabular}{|c|c|c|c|c|c|c|c|}
\hline
& & \multicolumn{2}{|c|}{$\ap = 0.4$} & \multicolumn{2}{|c|}{$\ap = 1$} & \multicolumn{2}{|c|}{$\ap = 1.5$} 
 \\
\cline{3-8}
$N$   & $h$  & $\|e_u\|_{\rm rms}$ & $\mathcal{K}$ & $\|e_u\|_{\rm rms}$ & $\mathcal{K}$ & $\|e_u\|_{\rm rms}$ & $\mathcal{K}$ \\%& $\|e\|_{\rm rms}$ & $\mathcal{K}$ \\ 
\hline
$7^2$ & $1/2$  & 2.546e-3  &5.861e6  & 4.702e-3 &1.849e6 & 7.718e-3 &8.227e5  \\%   &1.219e-2  \\
$15^2$ & $1/4$ & 3.897e-9  &2.011e7 & 4.816e-9  &6.034e6 & 6.509e-9 & 2.573e6   \\%   &9.423e-9 \\
$31^2$ & $1/8$ & 5.006e-16 &2.765e7 & 1.335e-15 &8.196e6 & 3.355e-15 &3.461e6 \\%  &8.892e-15 \\
\hline
\end{tabular}
\caption{RMS errors $\|e_u\|_{\rm rms}$  in solving the two-dimensional Poisson equation in Example 3,  where constant $c^* = 1/2$ is fixed.}\label{Tab4-2-2}
\end{center}
\vspace{-.5cm}
\end{table}
%%%% %%%%
Table \ref{Tab4-2-2} shows that our method can achieve a spectral accuracy if the solution is smooth enough. 
Numerical errors become negligible if the distance $h$ reduces to $h = 1/8$, while the condition number remains around $10^6\sim 10^7$.   
%Figure \ref{Fig4-2-2} further compares numerical errors for different $c^*$, where $N = 31^2$. 
%%%% %%%% Figure Starts 
%\begin{figure}[htb!]
%\centerline{
%\includegraphics[height = 5.26cm, width = 7.36cm]{}
%}
%\caption{Comparison of numerical errors for different $c^*$ in solving the two-dimensional Poisson equation in Example 2, where $N = 31^2$.}
%\label{Fig-Ex3-1}
%\end{figure}
%%%% %%%% Figure Ends 
%It shows that numerical errors are minimized for around $0.35 \le c^* \le 0.45$. \YZ{saturation errors?}

% ===================================================
\section{Nonhomogeneous boundary value problems}
\label{section-nonhomogeneous}
%%%% %%%%
In this section, we generalize our method to treat nonhomogeneous Dirichlet boundary conditions. 
To this end, we propose a two-stage RBF method. 

%%%% %%%%
Following the discussions in \Cref{section-method}, we introduce an auxiliary function $w(\xb)$ for $\xb \in {\mathbb R}^d$, such that 
\begin{equation}\label{fun-w}
w(\xb) = 
\left\{ 
\begin{aligned}
w_h(\xb), \qquad &\text{ if } \xb \in \Om, \\
g(\xb),\qquad     &\text{ if } \xb\in \Om^c.
\end{aligned}
\right.
\end{equation}
There are two things to note for the construction of $w$. First, since the regularity of solutions affects convergence rates, as illustrated in \Cref{section-analysis} and the numerical examples in \Cref{section-experiments}, it is preferred that $w$ be defined as regular as possible. Second, since the right-hand side of \cref{Poisson-v} involves the evaluation of $(-\Dt)^\fl{\ap}{2}w$, 
we want to construct $w$ such that the action of fractional Laplacian on it can be easily approximated. 
We again use Gaussian RBFs to present $w_h$. 
More precisely, let
\begin{equation}\label{what}
w_h(\xb) := \sum_{l = 1}^M\widetilde{\lambda}_l\,e^{-\widetilde{\varepsilon}^2|\xb - \widetilde{\xb}_l|^2}, \qquad \mbox{for} \ \ \xb \in {\mathbb R}^d.
\end{equation}
The selection of shape parameter $\widetilde{\varepsilon}$ and RBF center points $\widetilde{\xb}_l$ in \cref{what} is independent of those in \cref{ansatz}, and we usually choose the center points $\widetilde{\xb}_l$ near the boundary region. 
To determine coefficients $\widetilde{\lambda}_l$, we require  
\begin{equation}\label{w-BC}
w_h(\widetilde{\xb}_k') = g(\widetilde{\xb}'_k), \qquad \mbox{for} \ \ \widetilde{\xb}_k' \in \widetilde{\Og},
\end{equation}
 where $\widetilde{\Og}$ denotes a closed region satisfying $\widetilde{\Og}\subset \Og^c$ and $\overline{\Og} \cap \widetilde{\Og} = \p\Og$, and $\{\widetilde{\xb}_k'\}_{k=1}^M \subset \widetilde{\Og}$ is a set of test points. 
In practice, the shape parameter and RBF center points are chosen such that the error between $w_h$ and $g$ over $\widetilde{\Og}$ is smaller than a desired tolerance $\tau$, i.e. $\|w_h - g\|_{\rm rms} < \tau$. 
Hence, we have  $w_h(\xb) \approx g(\xb)$ for any $\xb \in \widetilde{\Og}$.

%%%% 
Next, we approximate the function $(-\Dt)^\fl{\ap}{2}w(\xb)$ in \cref{Poisson-v}. 
Substituting \cref{fun-w} into the definition  \cref{integralFL}, we obtain
\begin{eqnarray}\label{Lapw}
&&(-\Delta)_h^{\frac{\al}{2}}w (\xb) = C_{d,\al} \int_{\R^d} \frac{w_h(\xb)-w_h(\yb)}{|\yb-\xb|^{d+\al}} d\yb + C_{d,\al} \int_{\Om^c} \frac{w_h(\yb) - g(\yb)}{|\yb-\xb|^{d+\al}} d\yb \nn\\
&&\hspace{2cm} = (-\Dt)^\fl{\ap}{2}w_h(\xb) + C_{d,\al} \int_{\Om^c\backslash\widetilde{\Og}} \frac{w_h(\yb) - g(\yb)}{|\yb-\xb|^{d+\al}} d\yb, \qquad \mbox{for} \ \ \xb \in \Og. \qquad
\end{eqnarray}
The integral region in the second term reduces to $\Og^c\backslash\widetilde{\Og}$ because of (\ref{w-BC}). 
It is clear that the second term in (\ref{Lapw}) is caused by the nonhomogeneous boundary conditions. 
Since $\xb \in \Og$ and $\yb \in \Og^c\backslash\widetilde{\Og}$, the integral is free of singularity and thus can be accurately approximated by numerical quadrature rules. 
While the term $(-\Dt)^\fl{\ap}{2}w_h(\xb)$ can be analytically obtained by combining (\ref{what}) and (\ref{analy}). 

%%%% %%%%
Hence, our two-stage RBF method for solving nonhomogeneous boundary value problems includes:  (i) constructing $w_h(\xb)$ and approximating $(-\Dt)^\fl{\ap}{2}w$ with RBFs as in (\ref{Lapw}); (ii) solving the homogeneous Poisson equation (\ref{Poisson-v}) with RBF methods in Section \ref{section-method}. 
Next, we will present two examples to test the performance of our method in solving the problem (\ref{Poisson}) with nonhomogeneous boundary conditions. 

\bigskip
%%%% %%%% %%%% %%%% %%%%
\noindent{\bf Example 4.}  Consider the 1D fractional Poisson equation (\ref{Poisson}) on $\Og = (-1, 1)$, where the functions $f$ and $g$ are chosen as
\begin{eqnarray}
&&f(x) = \frac{2^\ap\Gamma\big((1+\ap)/2\big)\Gamma\big(1+\ap/2\big)}{\sqrt{\pi}}\,_2F_1\Big(\frac{1+\ap}{2}, 1+\fl{\ap}{2};\, \frac{1}{2}; -x^2\Big), \nonumber\\
&&g(x) = \frac{1}{1+x^2}.\nonumber
\end{eqnarray}
Then exact solution of (\ref{Poisson}) is given by $u(x) = 1/(1+x^2)$ for $x \in {\mathbb R}$. 

%%%% %%%%
First, we construct function $w_h$ in the form of (\ref{what}). 
Choose $\widetilde{\Og} = [-1.25, -1]\cup[1, 1.25]$.
The RBF center points $\widetilde{\xb}_l \in \widetilde{\Og}$ are set with uniform space $h = 1/32$. 
The shape parameter in (\ref{what}) is set as $\widetilde{\vep} = 1.4$.
To obtain $\widetilde{\lambda}_l$, we choose test points from the same set of center points $\widetilde{\xb}_l$ and impose the condition (\ref{w-BC}) to all test points. 
It shows that the RMS errors of approximating $w$  on $\widetilde{\Og}$ is $\|e_{w}\|_{\rm rms} = 2.4702e-10$. 
Then substituting $w_h(x)$ into (\ref{Lapw}) we can obtain the approximation of $(-\Dt)^\frac{\alpha}{2}w(x)$. 
%{\color{blue}Numerical errors in approximating $(-\Dt)^{\frac{\ap}{2}}w$?} \XT{I think the numerical error for approximating $(-\Dt)^{\frac{\ap}{2}}w$ involves the approximation error of $w_h$ to $g$ on $\widetilde{\Om}$ and the approximation error of the far field integration. Do you think we should estimate how large the error is?}\YZ{12/24: Maybe we can skip it, as this does not lead to any conclusive argument.}

%%%% %%%%
Next, we move to solve the homogeneous problem \cref{Poisson-v}. 
In \cref{Tab4-2-3}, we present numerical errors in solution $u$ for different $\ap$ and $c^*$, where the RBF center points in \cref{ansatz} are uniformly distributed on $(-1, 1)$. 
%%%% %%%% %%%% %%%% Table Starts
\begin{table}[htbp]
\begin{center} 
\begin{tabular}{|c|c|c|c|c|c|c|}
\hline
 & \multicolumn{2}{|c|}{$\ap = 0.4$} & \multicolumn{2}{|c|}{$\ap = 1$} & \multicolumn{2}{|c|}{$\ap = 1.5$} \\
\cline{2-7}
$N$     & $c^* = 0.5$ & $c^* = 0.65$ & $c^* = 0.5$ & $c^* = 0.65$ & $c^* = 0.5$ & $c^* = 0.65$ \\
\hline
 7 & 1.809e-4 & 1.060e-6 & 5.472e-4 & 2.383e-5 & 1.092e-3 & 4.378e-5 \\
15 & 8.076e-8 & 4.043e-8 & 2.542e-7 & 1.027e-7 & 4.968e-7 & 2.083e-7 \\
31 &8.24e-10 &1.62e-10 & 2.997e-9 &3.04e-10 & 7.439e-9 & 6.12e-10 \\
\hline
\end{tabular}
\caption{RMS errors $\|e_u\|_{\rm rms}$ in solving problem (\ref{Poisson}) with exact solution $u(x) = 1/(1+x^2)$, where $h = 2/(N+1)$.}\label{Tab4-2-3}
\end{center}
\vspace{-.3cm}
\end{table}
Note that the condition number of the stiffness matrix only depends on the number of points $N$, power $\ap$, and constant $c^*$. 
Hence, the condition numbers for $c^* = 0.5$  are the same as those presented in \cref{Tab4-2-1}, while the condition numbers of $c^* = 0.65$ are much smaller.
\cref{Tab4-2-3} shows that the errors reduce quickly as the number of points $N$ increases. 
For $N = 31$, the error in constructing $w_h$ becomes dominant, and it stays around $10^{-10}$ even if $N$ is further increased. 
The effect of constant $c^*$ is similar to those observed \Cref{section-experiments} for problems with homogeneous boundary conditions.

\bigskip
%%%% %%%% %%%% %%%% %%%%
\noindent{\bf Example 5.} Consider the 2D Poisson problem (\ref{Poisson}) on $\Og = (-1, 1)^2$, where the functions $f$ and $g$ are chosen as
\begin{eqnarray}
&&f(\xb) = 2^\ap\Gamma\big(1+\fl{\alpha}{2}\big)\Gamma\big(2+\frac{\ap}{2}\big) \,_2F_1\Big(2+\fl{\ap}{2}, 1+\fl{\ap}{2}; 2, -|\xb|^2\Big) x,\nonumber\\
&&g(\xb) = \frac{x}{1+|\xb|^2}.\nonumber
\end{eqnarray}
The exact solution is given by $u(\xb) = x/(1+|\xb|^2)$ for $\xb \in {\mathbb R}^2$. 

%%%% %%%%
We first construct function $w_h(\xb)$. 
Choose $\widetilde{\Og} = [-{17}/{16},\, {17}/{16}]^2\backslash(-1,1)^2$, i.e. a layer outside of domain $\Og$ with a width of $1/16$. 
The RBF center points in (\ref{what}) are chosen as equally spaced grid points with $h = 1/32$, and 
the test points $\widetilde{\xb}'_k$ in (\ref{w-BC}) are taken from the same set of center points. 
%%%% %%%% %%%% %%%% Table Starts
\begin{table}[htbp]
\begin{center} 
\begin{tabular}{|c|c|c|c|c|c|c|c|}
\hline
 & & \multicolumn{2}{|c|}{$\ap = 0.4$} & \multicolumn{2}{|c|}{$\ap = 1$} & \multicolumn{2}{|c|}{$\ap = 1.5$} \\
\cline{3-8}
$N$    & $h$ & $c^* = 0.5$ & $c^* = 0.65$ & $c^* = 0.5$ & $c^* = 0.65$ & $c^* = 0.5$ & $c^* = 0.65$ \\
\hline
 $7^2$ & 1/4 & 4.875e-5 & 8.297e-5 & 7.986e-5 & 1.347e-5 & 9.413e-5 & 2.045e-4\\
$15^2$ &1/8 & 4.545e-6 & 4.567e-6 & 9.299e-6 & 6.723e-6 & 7.292e-6 & 1.151e-5\\
$31^2$ & 1/16 & 2.907e-6 & 1.840e-6 & 3.740e-6 & 2.230e-6 & 2.946e-6 & 1.711e-6\\
\hline
\end{tabular}
\caption{RMS errors $\|e_u\|_{\rm rms}$ in solving problem (\ref{Poisson}) with exact solution $u(\xb) = x/(1+|\xb|^2)$ for $\xb \in {\mathbb R}^2$.}\label{Tab4-2-4}
\end{center}
\vspace{-.3cm}
\end{table}
The shape parameter in \cref{what} is $\widetilde{\vep} = 1.4$. 
In this case, the RMS error in constructing $w_h$ is $\|e_{w}\|_{\rm rms} = 9.372e-8$. 
\Cref{Tab4-2-4} shows the RMS errors in  solution $u$ for different $\ap$ and $c^*$, where the RBF center points in \cref{ansatz} are uniformly distributed on $(-1, 1)^2$. 
Similar to 1D cases, the numerical error in solution $u$ is bounded by that in constructing function $w_h$. 
Generally, the error rates for nonhomogeneous cases depend on the accuracy of fitting boundary conditions along $\p\Og$.

% =================================================
\section{Conclusion}
\label{section-conclusion}
In this paper, we have introduced a new Gaussian radial basis functions (RBFs) collocation method for solving the fractional Poisson equation. 
Hypergeometric functions are employed to express the fractional Laplacian of Gaussian RBFs, thus leading to efficient assembly of matrices. 
Unlike the previously developed RBF-based methods, our method yields a stiffness matrix with a Toeplitz structure, facilitating the development of FFT-based fast algorithms for efficient matrix-vector multiplications.
A key focus of our study was the comprehensive investigation of shape parameter selection to address challenges related to ill-conditioning and numerical stability. 
In particular, our approach involves keeping $\gamma = (\vep h)^2$ constant, where $\vep$ represents the shape parameter, and $h$ is the spatial discretization parameter. 
Under this assumption, we provide rigorous error estimates for our method, filling a fundamental gap in the existing literature on RBF-based collocation methods.
Notably, to show numerical stability,  we establish a fractional Poincar\'e type inequality for globally supported Gaussian mixture functions.
Our analytical results reveal that the numerical error consists of a part that converges to zero at a rate dependent on the smoothness of functions, and a saturation error that depends on $\gamma$. 
We conduct numerical experiments to inform the practical selection of $\gamma$.

There are several future directions that need to be mentioned. 
Since the treatment for non-homogeneous Dirichlet boundary value problems depends on converting them to homogeneous ones through auxiliary functions, 
it is worthwhile to explore more efficient numerical algorithms and rigorous error analysis for constructing regular auxiliary functions, particularly in high dimensions. Future considerations may also involve exploring other types of boundary conditions. Our proposed method could potentially work for meshfree points, and it is of further interest to conduct comprehensive numerical studies in this respect.
Lastly, extending the method to solve other types of fractional PDEs is also worth further investigation.

\bigskip
\appendix
\section{Saturation errors}
\label{appendix}

%\XT{I tentatively commented out Appendix A.2.}
%\subsection{Coefficient function}
%\label{appendix1}

%\begin{lemma} Let $\cI(d, C)$ be defined in \Cref{def:sumGaussian}. Then
%$\cI(d, C)$ is not empty. 
%\end{lemma}
%\begin{proof}
%First notice that $\sum_{\kb\in\Z^d} e^{-|\kb|^2} $ is convergent \XT{(reference?)}, we can denote
%\begin{equation*}
%\sum_{\kb\in\Z^d,\kb\neq\bm{0}} e^{-|\kb|^2} = C_d.
%\end{equation*}
%If $C_d\leq C$, then it is obvious that $\cI(d,C) \supset [1,\infty)$. If $C_d> C$, we 
%take $y \in \R$ such that $y^2 -1 = \ln(C_d/C)$, then for all $\kb\in \Z^d$, $\kb\neq \{0\}$,
%\begin{equation*}
%e^{-y^2 |\kb|^2} = e^{-|\kb|^2}(e^{-|\kb|^2})^{y^2-1}  \leq e^{-|\kb|^2} (e^{-1})^{y^2-1} = e^{-|\kb|^2} C/C_d.
%\end{equation*}
%Therefore $\sum_{\kb\in\Z^d,\kb\neq\bm{0}} e^{-y^2|\kb|^2}\leq C$ and $\cI(d, C)$ is not empty. 
%\end{proof}

We show the coefficients $a_{\bm\ap}^{(\bm\beta)}(\xb)/{\bm \ap}!$ of the saturation errors in Theorem \ref{them-conv} for different $\gamma$ and $x$ in one-dimensional cases. 
Tables \ref{Tab-A1} and \ref{Tab-A2} present the  coefficients for ${\bm \beta} = 0$ and ${\bm \beta} = 2$, respectively.
%%%% %%%% %%%% %%%% Table Starts
\begin{table}[htbp]
\begin{center} 
\begin{tabular}{|c|c|c|c|c|}
\hline
 & \multicolumn{2}{|c|}{$\gamma = 0.36$} & \multicolumn{2}{|c|}{$\gamma = 0.25$} \\
\cline{2-5}
   ${\bm\ap}$  & $x = 0.25$ & $x = 0.5$ & $x = 0.25$ & $x = 0.5$\\
   \hline
   0& 2.4808e-12 & 4.9617e-12& 1.4314e-17&  2.8629e-17 \\
   1& 2.1649e-11 & 0 & 1.7988e-16& 0  \\
   2& 9.4464e-11 & 1.8893e-10& 1.1302e-15& 2.2604e-15 \\
   3& 2.7478e-10 & 0 & 4.7342e-15 & 0 \\
   4& 5.9949e-10 &  1.1990e-09& 1.4873e-14& 2.9746e-14\\
   5& 1.0463e-09 & 0 &  3.7380e-14& 0  \\
   6& 1.5218e-09 & 3.0436e-09 & 7.8288e-14 & 1.5658e-13 \\
   7& 1.8971e-09 & 0 & 1.4054e-13 & 0 \\
   8& 2.0695e-09 & 4.1389e-09 & 2.2076e-13&   4.4153e-13\\
     \hline
\end{tabular}
\caption{Coefficients $|a_{\bm\ap}^{({\bm \beta})}(x)|/{\bm \ap}!$  of the saturation errors for ${\bm \beta}= 0$ in 1D  cases.}\label{Tab-A1}
\end{center}
\vspace{-.3cm}
\end{table}
Generally, for the same ${\bm \ap}$ and ${\bm \beta}$ the smaller the constant $\gamma$,  the smaller the coefficient $|a_{\bm\ap}^{({\bm \beta})}(x)|/{\bm \ap}!$

%%%% %%%% %%%% %%%% Table Starts
\begin{table}[htbp]
\begin{center} 
\begin{tabular}{|c|c|c|c|c|}
\hline
 & \multicolumn{2}{|c|}{$\gamma = 0.36$} & \multicolumn{2}{|c|}{$\gamma = 0.25$}\\ 
\cline{2-5}
   ${\bm \ap}$  & $x = 0.25$ & $x = 0.5$ & $x = 0.25$ & $x = 0.5$\\
   \hline
   0& 6.0278e-34 & 9.7940e-11 & 0 &  5.6511e-16\\
   1& 8.2351e-10 & 0 & 6.9215e-15 & 0 \\
   2& 2.4808e-12 & 3.4622e-09 & 1.4314e-17 &   4.2387e-14 \\
   3& 9.6826e-09 & 0 & 1.7288e-13 & 0 \\
   4& 9.4464e-11 & 2.0403e-08 & 1.1302e-15 & 5.2993e-13 \\
   5& 3.4048e-08 & 0 &  1.2935e-12 & 0  \\
   6& 5.9949e-10 & 4.8128e-08 & 1.4873e-14 & 2.6507e-12\\
   7& 5.6819e-08 & 0 & 4.6020e-12 & 0 \\
   8& 1.5218e-09 & 6.0903e-08 &  7.8288e-14 &  7.1059e-12\\
     \hline
\end{tabular}
\caption{Coefficients $|a_{\bm\ap}^{({\bm \beta})}(x)|/{\bm \ap}!$ of the saturation errors for ${\bm \beta}= 2$ in 1D cases.}\label{Tab-A2}
\end{center}
\vspace{-.3cm}
\end{table}

\Cref{FigA1} shows the coefficients of saturation errors versus $x$, where $\gamma = 0.36$. 
It shows that $|a_{\bm\alpha}^{(\bm\beta)}(x)|/{\bm\alpha}!$ is a periodic function of $x$.
%%%% %%%% Figure starts
\begin{figure}[htbp]
\centerline{
(a)\includegraphics[height = 6.56cm, width = 7.36cm]{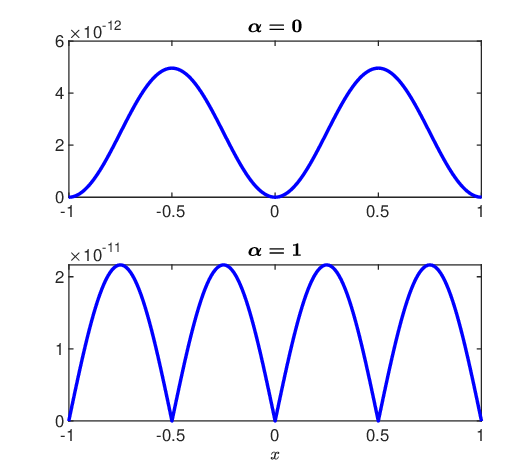}\hspace{-5mm}
(b)\includegraphics[height = 6.56cm, width = 7.36cm]{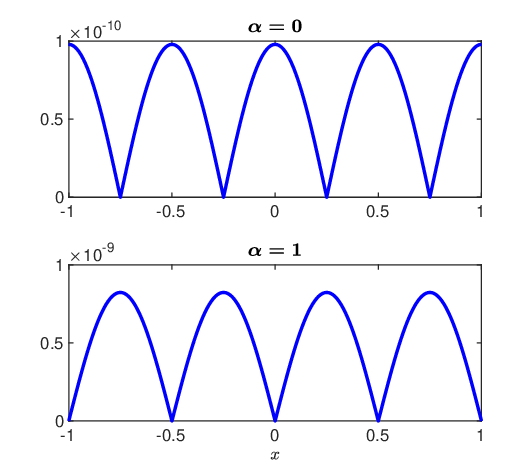}}
\caption{Coefficient of saturation errors (i.e. $|a_{\bm\alpha}^{(\bm\beta)}(x)|/{\bm\alpha}!$) in 1D cases. (a) ${\bm \beta} = 0$; (b)  ${\bm\beta} = 2$.}
\label{FigA1}
\vspace{-.3cm}
\end{figure}
 \Cref{FigA2} presents the relation of coefficient function $|a_{\bm\alpha}^{(\bm\beta)}(x)|/{\bm\alpha}!$ and $\gamma$, where $x = 0.25$ is fixed. 
%%%% %%%% Figure starts
\begin{figure}[htbp]
\centerline{
\includegraphics[height = 5.06cm, width = 6.86cm]{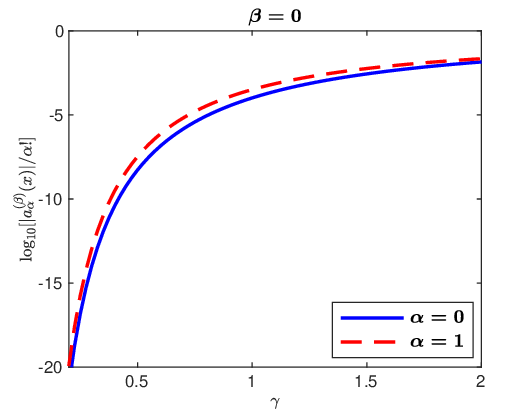}\hspace{-2mm}
\includegraphics[height = 5.06cm, width = 6.86cm]{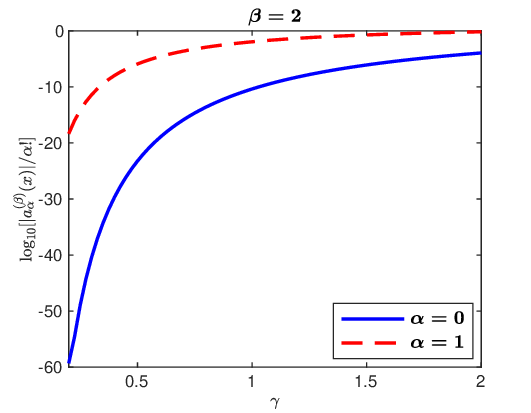}}
\caption{Coefficient of saturation errors  at $x = 0.25$ in 1D cases.}
\label{FigA2}
\vspace{-.3cm}
\end{figure}

\bibliographystyle{siamplain}
\bibliography{rbf, 2022-DiscreteFL}

\end{document}